\newtheorem{example}{Example}
\newtheorem{assumption}{Assumption}
\newcommand{\argmin}{\operatornamewithlimits{argmin}}
\newcommand{\iprod}[2]{\langle #1, #2 \rangle}
\newcommand{\diag}{\mathrm{diag}}
\numberwithin{equation}{section}
\numberwithin{example}{section}
\newcounter{cnt}
\xdef \csname c\Alph{cnt}\endcsname {\noexpand\mathcal{\Alph{cnt}}}%
\xdef \csname b\Alph{cnt}\endcsname {\noexpand\mathbb{\Alph{cnt}}}%
\newcommand{\be}{\begin{equation}}
\newcommand{\ee}{\end{equation}}
\newcommand{\bee}{\begin{equation*}}
\newcommand{\eee}{\end{equation*}}
\newcommand{\bea}{\begin{eqnarray}}
\newcommand{\eea}{\end{eqnarray}}
\newcommand{\beaa}{\begin{eqnarray*}}
\newcommand{\eeaa}{\end{eqnarray*}}
\newcommand{\st}{\mbox{ s.t. }}
\newcommand{\rng}{\mathrm{Range}}
\newcommand{\tr}{\mathrm{tr}}
\newcommand{\orth}{ \mathrm{orth} }
\newcommand{\prox}{ \mathrm{prox} }
\newcommand{\dist}{ \mathrm{dist} }
\title{Low-rank Matrix Optimization Using Polynomial-filtered Subspace Extraction}
\author{
	Yongfeng Li \thanks{Beijing International Center for Mathematical Research, Peking University, CHINA (YongfengLi@pku.edu.cn)}
	\and
	Haoyang Liu \thanks{Beijing International Center for Mathematical Research, Peking University (liuhaoyang@pku.edu.cn).}
	\and
	Zaiwen Wen
	\thanks{Beijing International Center for Mathematical
		Research, Peking University (wenzw@pku.edu.cn).
		Research supported in part by the NSFC grants 11421101 and 11831002, and by the National
		Basic Research Project under the grant 2015CB856002.}
	\and
	Yaxiang Yuan
	\thanks{State Key Laboratory of Scientific and Engineering Computing, Academy of Mathematics and Systems Science,Chinese Academy of Sciences, China (yyx@lsec.cc.ac.cn). Research supported in part by NSFC grants 11331012 and 11461161005.}
}
\begin{document}
\maketitle

\begin{abstract}
    In this paper, we study first-order methods on a large variety of
low-rank matrix optimization problems, whose solutions only live in a low
dimensional eigenspace. Traditional first-order methods depend on the eigenvalue decomposition 
at each iteration which takes most of the computation time. 
In order to reduce the cost, we propose an inexact algorithm framework
based on a polynomial subspace extraction.
The idea is to use an additional polynomial-filtered iteration to extract an approximated eigenspace, 
and project the iteration matrix on this subspace, followed by an optimization update.
The accuracy of the extracted subspace can be controlled by the degree of the polynomial filters.
This kind of subspace extraction also enjoys the warm start property: the subspace of the
current iteration is refined from the previous one. 
Then this framework is instantiated into two algorithms: 
the polynomial-filtered proximal gradient method and the
polynomial-filtered alternating direction method of multipliers.
We give a theoretical guarantee to the two algorithms that the polynomial degree is not necessarily very large.
They share the same convergence speed as the corresponding original methods if
the polynomial degree grows with an order $\Omega(\log k)$ at the $k$-th iteration.
If the warm-start property is considered, the degree can be reduced to a constant, independent of the
iteration $k$. Preliminary numerical experiments on
several low-rank matrix optimization problems show that the polynomial filtered algorithms usually provide multi-fold speedups.

\end{abstract}
\begin{keywords}
Low-rank Matrix Optimization, Eigenvalue Decomposition, Inexact Optimization Method, Polynomial Filter, Subspace Extraction.
\end{keywords}

\begin{AMS}
65F15, 90C06, 90C22, 90C25
\end{AMS}

\section{Introduction} \label{sec:intro}
Eigenvalue decompositions (EVD) are commonly used in large varieties of
matrix optimization problems with spectral or low-rank structures.
For example, in semi-definite programmings (SDP), many optimization methods need to compute all positive eigenvalues
and corresponding eigenvectors of a matrix each iteration
in order to preserve the semi-definite structure.
In matrix recovery type problems, people are interested in the low-rank
approximations of the unknown matrix. In \cite{NIPS2009_3704}, the authors
propose a convex relaxation model to solve the robust PCA problem and solve
it by the accelerated proximal gradient (APG) approach. The primal problem is
solved by the augmented Lagrangian method (ALM) in \cite{Lin2013TheAL}.
The application can also be regarded as an optimization problem on the
rank-$r$ matrix space thus manifold optimization methods can be applied \cite{low-rank-riemannian}.
In all methods
mentioned above, EVD of singular value decompositions (SVD) are required. The latter can be
transformed to the former essentially.
In maximal eigenvalue problems, the objective function is the largest eigenvalue
of a symmetric matrix variable. It is used in many real applications,
 for example, the dual formulation of
the max-cut problem \cite{goemans1995improved}, phase retrieval, 
blind deconvolution \cite{doi:10.1137/15M1034283}, distance metric
learning problem \cite{ying2012distance}, and Crawford number computing \cite{kressner2018subspace}.
These optimization methods require the largest
eigenvalue and the corresponding eigenvector at each iteration. The variable dimension is usually large.

In general, at least one full or truncated EVD per iteration is required
in most first-order methods to solve these applications.
It has been long realized that EVD is
very time-consuming for large problems, especially when
the dimension of the matrix and the number of the required eigenvalues are
both huge. First-order methods suffer from this issue greatly since they usually take
thousands of iterations to converge. Therefore, people turn to inexact methods
to save the computation time.
One popular approach relies on approximated eigen-solvers to solve the eigenvalue sub-problem, 
such as the Lanczos method and randomized EVD with early stopping rules, see \cite{zhou2011godec,becker2013randomized,soltani2017fast}
and the references therein. The performance of these methods is determined by
the accuracy of the eigen-solver, which is sometimes hard to control in practice.
Another type of method is the so-called subspace method.
By introducing a low-dimensional subspace, one can greatly reduce the dimension
of the original problem, 
and then perform refinement on this subspace as the iterations proceed.
For instance, it is widely used to solve an univariate maximal 
eigenvalue optimization problem \cite{kressner2018subspace,sirkovic2016subspace,kangal2018subspace}.
In \cite{zhou2006self}, the authors propose an inexact method which simplifies
the eigenvalue computation using Chebyshev filters in self-consistent-field (SCF) calculations.
Though inexact methods are widely used in real applications,
the convergence analysis is still very limited. Moreover, designing a practical strategy
to control the inexactness of the algorithm is a big challenge.

\subsection{Our Contribution}
Our contributions are briefly summarized as follows.
\begin{enumerate}
	\item For low-rank matrix optimization problems involving eigenvalue computations,
	we propose a general inexact first-order method framework with polynomial
	filters which can be applied to most existing solvers without much difficulty.
	It can be observed that for low-rank problems, the iterates always lie in a low dimensional
	eigenspace. Hence our key motivation is to use
	one polynomial filter subspace extraction step to estimate the subspace, followed by
	a standard optimization update. The algorithm also benefits from the warm-start
	property: the updated iteration point can be fed to the polynomial filter again
	to generate the next estimation. Then we apply this framework to the
	proximal gradient method (PG) and the alternating direction method of multipliers (ADMM) to obtain
	a polynomial-filtered PG (PFPG) method and a polynomial-filtered ADMM (PFAM)
    method (see Section \ref{sec:pf-algs}), respectively.
	\item We analyze the convergence property of PFPG and PFAM. It can be proved
	that the error of one exact and inexact iteration is bounded by the principle angle
	between the true and extracted subspace, which is further controlled by the polynomial
	degree. The convergence relies on an assumption that the initial space should not
	be orthogonal to the target space, which is essential but usually holds in many applications.
	Consequently, the polynomial degree barely increases during the iterations. It can
	even remain a constant under the warm-start setting. This result provides us the opportunity
	to use low-degree polynomials throughout the iterations in practice.
	\item We propose a practical strategy to control the accuracy of the polynomial
	filters so that the convergence is guaranteed. A portable implementation
	of the polynomial filter framework is given based on this strategy. 
	It can be plugged into any first-order methods with only a few lines
	of codes added, which means that the computational cost reduction is almost a free
	lunch.
\end{enumerate}

We mention that our work
is different from randomized eigen-solver based methods \cite{zhou2011godec,becker2013randomized,soltani2017fast}.
The inexactness of our method is mainly dependent on the quality of the subspace,
which can be highly controllable using polynomial filters and the warm-start strategy. On the other hand, 
the convergence of randomized eigen-solver
based methods relies on the tail bound or the per vector error bound of the solution returned by the
eigen-solver, which is usually stronger than subspace assumptions.
Our work also differs from the subspace method proposed in \cite{kressner2018subspace,kangal2018subspace}.
First the authors mainly focus on maximal eigenvalue problems, which have special structures,
while we consider general matrix optimization problems with low-rank variables. Second, in
the subspace method, one has to compute the exact solution of the sub-problem, which is
induced by the projection of the original one onto the subspace. However, after we obtain
the subspace, the next step is to extract eigenvalues and eigenvectors from the projected matrix
to generate inexact variable updates. There are no sub-problems in general. 

\subsection{Source Codes}
The \textsc{matlab} codes for several low-rank matrix optimization examples
are available at \url{https://github.com/RyanBernX/PFOpt}.

\subsection{Organization}
The paper is organized as follows. In Section \ref{sec:pf-algs}, we introduce 
the polynomial filter algorithm and propose two polynomial-filtered optimization
methods, namely, PFPG and PFAM. Then the convergence analysis is established in Section \ref{sec:conv}.
Some important details of our proposed algorithms
in order to make them practical are summarized in Section \ref{sec:impls}.  The effectiveness of PFPG
and PFAM is demonstrated in Section \ref{sec:num} with a number of practical applications. Finally,
we conclude the paper in Section \ref{sec:conclusion}.
\subsection{Notation}
Let $\mathcal{S}^n$ be the collection of all $n$-by-$n$ symmetric matrices. For any $X \in \mathcal{S}^n$, we use $\lambda(X) \in \mathbb{R}^n$
to denote all eigenvalues of $X$, which are permuted in descending order, i.e.,
$\lambda(X) = (\lambda_1, \ldots, \lambda_n)^T$ where $\lambda_1 \ge \lambda_2 \cdots \ge \lambda_n$.
For any matrix $X \in \mathbb{R}^{n\times n}$, $\mathrm{diag}(X)$
denotes a vector in $\mathbb{R}^n$ consisting of all diagonal entries of
$X$. For any vector $x \in \mathbb{R}^n$, $\mathrm{Diag}(x)$ denotes
a diagonal matrix in $\mathbb{R}^{n\times n}$ whose $i$-th diagonal entry
is $x_i$. We use $\iprod{A}{B}$ to define the inner product in the matrix
space $\mathbb{R}^{n\times n}$, i.e., $\iprod{A}{B}:=\mathrm{Tr}(A^TB)$
where $\mathrm{Tr}(X):=\sum_{i=1}^n X_{ii}$ for $X \in \mathbb{R}^{n\times n}$.
The corresponding Frobenius norm is defined as $\|A\|_F = \mathrm{Tr}(A^TA)$.
The Hadamard product of two matrices
or vectors of the same dimension is denoted by $A\odot B$ with $(A\odot B)_{ij} = A_{ij}\times B_{ij}$.
For any matrix $X \in \bR^{n\times p}$, $\rng(X)$ denotes the subspace spanned by
the columns of $X$.

\section{Polynomial Filtered Matrix Optimization} \label{sec:pf-algs}
\subsection{Chebyshev-filtered Subspace Iteration} \label{sec:cheby}
The idea of polynomial filtering is originated from a well-known fact that
polynomials are able to manipulate the eigenvalues of any symmetric matrix $A$
while keeping its eigenvectors unchanged. 
Suppose $A$ has the eigenvalue decomposition $A = U\mathrm{Diag}(\lambda_1,\ldots,\lambda_n)U^T$,
then the matrix $\rho(A)$ has the eigenvalue decomposition
$\rho(A) = U\mathrm{Diag}(\rho(\lambda_1),\dots,\rho(\lambda_n))U^T$.

Consider the traditional subspace iteration, $U \leftarrow AU$, the convergence of the desired
eigen-subspace is determined by the gap of the eigenvalues, which can be
very slow if the gap is nearly zero. This is where the polynomial filters
come into practice: to manipulate the eigenvalue gap aiming to a
better convergence. For any polynomial $\rho(t)$ and initial matrix $U$, the polynomial-filtered 
subspace iteration is given by
\[
U \leftarrow \rho(A)U.
\]

In general, there are many choices of $\rho(t)$. One popular choice is to use
Chebyshev polynomials of the first kind, whose explicit expression can be
written as
\begin{equation} \label{eq:cheb}
T_d(t) = 
\begin{cases}
\cos(d\arccos t) & |t| \leq 1, \\
\frac{1}{2}( (t-\sqrt{t^2-1})^d + (t+\sqrt{t^2-1})^d ) & |t| > 1,
\end{cases}
\end{equation}
where $d$ is the degree of the polynomial.
One important property of Chebyshev polynomials is that they grow pretty fast outside
the interval $[-1,1]$,
which can be helpful to suppress all unwanted eigenvalues in this
interval effectively. Figure \ref{fig:cheby} shows some examples of Chebyshev
polynomials $T_d(t)$ with $d=1,\ldots,6$ and power of Chebyshev polynomials $T_3^q(t)$
with $q=1,\ldots,5$. That is exactly why Chebyshev polynomials are taken
into our consideration.

\begin{figure}[h]
	\centering
	\caption{Chebyshev polynomials and their variants} \label{fig:cheby}
	\subfigure[Chebyshev polynomials of the first kind $T_d(t)$]{
		\includegraphics[width=0.48\textwidth]{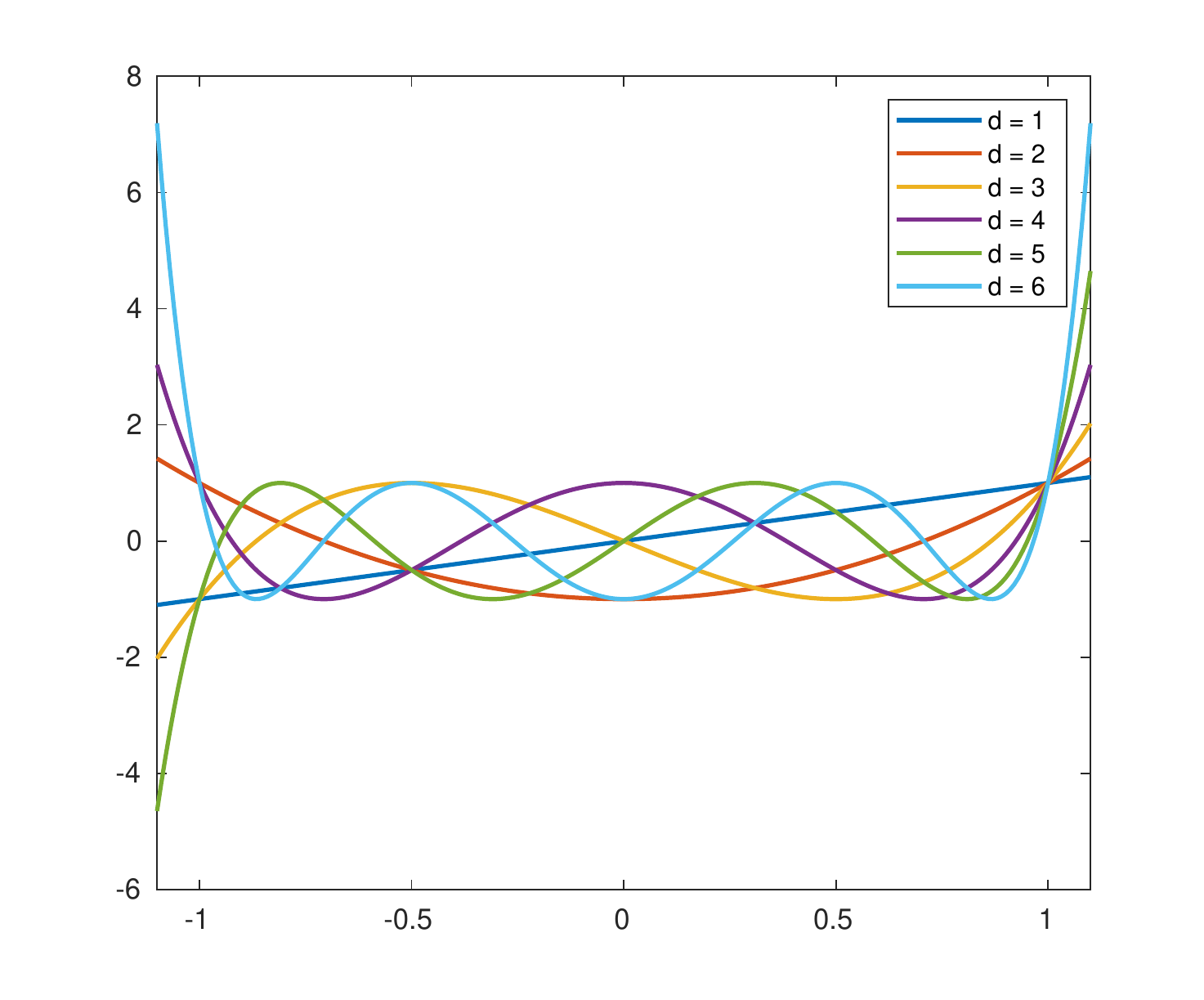}
	}
	\hfill
	\subfigure[Power of Chebyshev polynomial $T_3^q(t)$]{
		\includegraphics[width=0.48\textwidth]{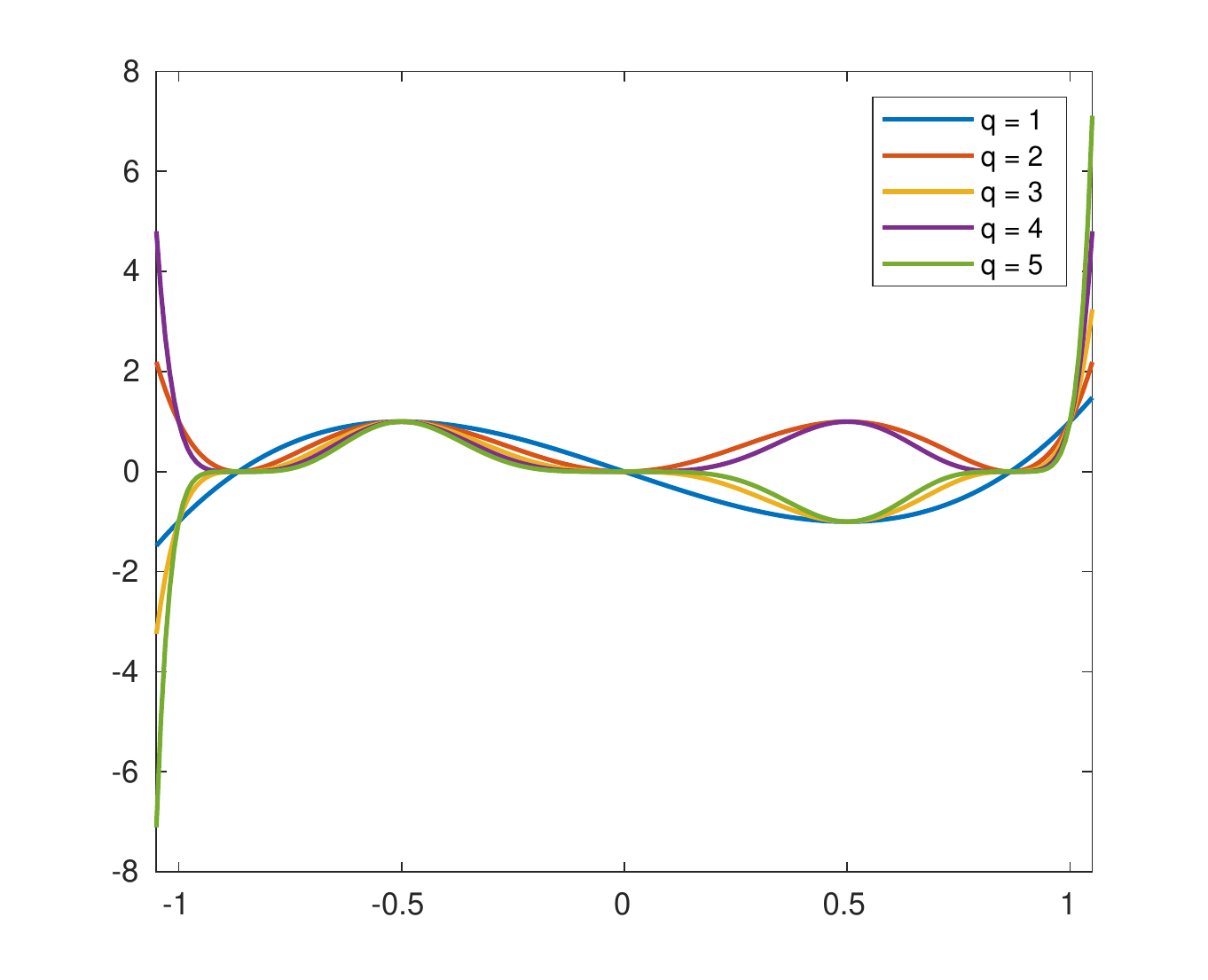}
	}
\end{figure}

In order to suppress all eigenvalues in a general interval $[a, b]$,
a linear mapping from $[a, b]$ to $[-1, 1]$ is constructed. To be precise,
the polynomial can be chosen as
\begin{equation} \label{eqn:cheb-linear-map}
\rho(t) = T_{d}\left(\frac{t-(b + a) / 2}{(b - a) / 2}\right).
\end{equation}

After the polynomial filters are applied, an orthogonalization step 
usually follows in order to prevent the subspace from losing rank. This
step is often performed by a single QR decomposition. Consequently, 
given an arbitrary matrix $U \in \bR^{n\times p}$ and a polynomial $\rho(t)$,
the polynomial-filtered subspace iteration can be simply written as
\begin{equation} \label{eqn:subspace-update}
	U^+ = \orth(\rho(A)U),
\end{equation}
where ``$\orth$'' stands for the orthogonalization operation.
In most cases, the updated matrix $U^+$ spans a subspace which contains some approximated
eigenvectors of $A$. By extracting eigen-pairs from this subspace, we are able to derive
a number of polynomial-filtered inexact optimization methods. In the next
subsections we present two examples: PFPG and PFAM.


%
\subsection{The PFPG Method} \label{sec:IPGM}
In this subsection we show how to apply the subspace update \eqref{eqn:subspace-update} to the proximal
gradient method, on a set of problems with a general form.
Consider the following unconstrained spectral operator minimization problem
\begin{equation} \label{eqn:mat-opt}
\begin{array}{rl}
    \min & h(x) := F(x) + R(x), \\
\end{array}
\end{equation}
where $F(x)$ has a composition form $F(x)=f\circ\lambda(\cB(x))$ with $\cB(x)
= G+\mathcal{A}^*(x)$ and $R(x)$ is a regularizer with simple structures but
need not be smooth.  Here $G$ is a known matrix in $\mathcal{S}^n$,
and $\mathcal{A}^*: \mathbb{R}^m\rightarrow \mathcal{S}^n$ is a linear
operator. The function $f: \mathbb{R}^n \rightarrow \mathbb{R}$ is smooth
and \emph{absolutely symmetric}, i.e., $f(x) = f(Px)$ for all $x \in
\mathbb{R}^n$ and any permutation matrix $P \in \mathbb{R}^{n\times n}$.  It
can be easily verified that $F(x)$ is well defined.

According to \cite[Section 6.7.2]{parikh2014proximal}, the gradient of $F$
in \eqref{eqn:mat-opt} is
\begin{equation} \label{eqn:grad-F}
\nabla F(x) = \mathcal{A}(\Psi(\cB(x))),
\end{equation}
where $\Psi$ is a spectral operator defined by
\[
    \Psi(X) = V\mathrm{Diag}(\nabla f (\lambda(X)))V^T,
\]
and $V\mathrm{Diag}(\lambda(X))V^T$ is the full eigen-decomposition of $X$. 

For any matrix $S\in \bR^{n\times p}$, let $\cP_{S}$ denote the projection operator on subspace $\rng(S)$. When $S$ 
is an orthogonal matrix, the operator $\cP_{S}$ is defined by
\begin{equation}
\cP_{S}(X) = SS^TXSS^T.
\end{equation}
If $V$ is exactly a $p$-dimensional eigen-space of the matrix $X$, the eigenvalue decomposition
of $\cP_V(X)$ can be written as
\begin{equation} \label{eqn:RR}
\cP_{V}(X) = \begin{bmatrix}
\tilde{V} & V_\perp
\end{bmatrix}
\begin{bmatrix}
\Lambda_p & O \\
O & O
\end{bmatrix}
\begin{bmatrix}
\tilde{V}^T \\
V_\perp^T
\end{bmatrix},
\end{equation}
where $(\tilde{V},\Lambda_p)$ is the corresponding eigen-pairs and we have $\rng(V) = \rng(\tilde{V})$.
Note that $(\tilde{V},\Lambda_p)$ has closed-form expression with $\tilde{V}=VY$ where $Y\Lambda_pY^T$ is
the full eigenvalue decomposition of $H:=V^TXV \in \bR^{p\times p}$.


Suppose
evaluating $\nabla F$ only involves a small number of eigenvalues in the problem \eqref{eqn:mat-opt},
then we are able to compute $\nabla F$ quickly as long as the corresponding 
subspace is given. We summarize them in Assumption \ref{assump:f}.

\begin{assumption} \label{assump:f}
\begin{enumerate}[(i)]
\item Let $\cI(x) \subset [n]$ be an integer set with $\cI(x) = \{s,s + 1,...,t\}$
and suppose that the gradient of $F(x)$ in \eqref{eqn:mat-opt} has the relationship
\begin{equation} \label{eqn:grad-F-2nd}
\nabla F(x) = \bar{g}:= \mathcal{A}(\Phi(\cP_{V_\cI}(\cB(x)))),
\end{equation}
where $\Phi$ is a spectral operator and
$V_\cI \in \mathbb{R}^{n\times |\cI|}$ contains eigenvectors  $v_i, i \in \cI(x)$ of $\cB(x)$. 

\item $\Phi$ is Lipschitz continuous with a factor $L_{\Phi}$,
\begin{equation}
\|\Phi(X) - \Phi(Y)\|_F \leq L_{\Phi}\|X-Y\|_F.
\end{equation}
\end{enumerate}
\end{assumption}

We now compare the gradient expression \eqref{eqn:grad-F-2nd} in Assumption \ref{assump:f} with
the original gradient \eqref{eqn:grad-F}. Expression \eqref{eqn:grad-F-2nd} implies that the subspace
$V_\cI$ contains all information for evaluating $\nabla F(x)$. In other words, we do not need
full eigenvalue decompositions as indicated by \eqref{eqn:grad-F}. The spectral operator $\Phi(\cdot)$ in \eqref{eqn:grad-F-2nd}
is also different from $\Psi(\cdot)$ defined in \eqref{eqn:grad-F}. The choice of $\Phi(\cdot)$ can be
arbitrary as long as it satisfies Assumption \ref{assump:f}. In most cases $\Phi(\cdot)$
inherits the definition of $\Psi(\cdot)$ but they need not be the same.
%
%
%
Under the Assumption \ref{assump:f}, we can write the proximal gradient algorithm as
\begin{equation} \label{eq:prox-update-1}
    x^{k+1} = \mathrm{prox}_{\tau_kR}(x^k - \tau_k \mathcal{A}(\Phi(\cP_{V^k_{\cI_k}}(\cB(x^k))),
\end{equation}
where $V^k_{\cI_k}$ contains the eigenvectors of $\cB(x^k)$ at iteration $k$, $\tau_k$ is the step size, and the proximal operator is defined by 
\begin{equation}
\prox_{tR}(x) = \argmin_{u} R(u) + \frac{1}{2t}\|u -x \|_2^2.
\end{equation}

In the scheme \eqref{eq:prox-update-1}, the main cost is the computation of
the EVD of $\cP_{V^k_{\cI_k}}(\cB(x^k))$ as the exact subspace $V^k_{\cI_k}$
is unknown. Thus, an idea to reduce the
expenses is to use polynomial filters to generate an approximation of
eigen-space. Then it is easy to compute the projection on this space and the
corresponding eigen-decomposition, whence the image of the spectral operator
on the projection matrix, as shown in \eqref{eqn:RR}. To obtain a good approximation, the update of
eigen-space is by means of Chebyshev polynomials which can suppress all
unwanted eigenvectors of $\cB(x)$ and the eigen-space in last step is used as initial
values. In summary, the proximal gradient method with polynomial filters can
be written as
\begin{eqnarray}
    x^{k+1} &=& \mathrm{prox}_{\tau_kR}(x^k - \tau_k \mathcal{A}(\Phi(\cP_{U^k}(\cB(x^k)))), \label{eq:pfpg1}\\
    U^{k+1} &=& \orth(\rho_{k+1}^{q_{k+1}}(\cB(x^{k+1}))U^k), \label{eq:pfpg2}
\end{eqnarray}
where $U^k \in \bR^{n \times p}$ with $p \geq |\cI_k|$ is an orthogonal matrix which
serves as an approximation of $V^k_{\cI_k}$ each step,
and $q_k \geq 1$ is a small integer (usually 1 to 3) which means the polynomial is applied for $q_k$ times
before the orthogonalization.

We mention that \eqref{eq:pfpg1} and \eqref{eq:pfpg2} are actually two different updates
that couple together. Given a subspace spanned by $U^k$, the step \eqref{eq:pfpg1}
performs the proximal gradient descent with the approximated gradient extracted from $U^k$.
The next step \eqref{eq:pfpg2} makes refinement on the subspace to obtain $U^{k+1}$ via
polynomial filters.
The subspace  will
become more and more exact as the iterations proceed. In practice, it is usually
observed that $U^k$ is very ``close'' to the true eigen-space $V^k_{\cI_k}$
during the last iterations of the algorithm.

\subsection{The PFAM Method} \label{sec:PFAM}
Consider the following standard SDP:
\begin{equation} \label{eq:sdp-primal}
\begin{array}{rl}
    \min & \iprod{C}{X}, \\
    \st & \cA X = b, X \succeq 0,
\end{array}
\end{equation}
where $\cA$ is a bounded linear operator.
Let $F(X) = 1_{\{X \succeq 0\}}(X)$ and $G(X) = 1_{\{\cA X =b \}}(X) +
\iprod{C}{X}$, where $1_{\Omega}(X)$ is the indicator function on a set
$\Omega$. Then the Douglas-Rachford Splitting (DRS) method on the primal SDP
\eqref{eq:sdp-primal} can be written as
\[
    Z^{k+1} = T_{\textrm{DRS}} (Z^k),
\]
where
\[
    T_{\textrm{DRS}} = \prox_{tG}(2\prox_{tF} - I) - \prox_{tF} + I,
\]
which is equivalent to the ADMM on the dual problem.
The explicit forms of $\prox_{tF}(Z)$ and $\prox_{tG}(Y)$ can be written as 
\begin{eqnarray*}
	\prox_{tF}(Z) &=& \cP_+(Z), \\
    \prox_{tG}(Y) &=& (Y + t C) - \cA^*(\cA\cA^*)^{-1}(\cA Y+ t\cA C-b),
\end{eqnarray*}
where $\cP_{+}(Z)$ is the projection operator onto the positive semi-definite cone.

Similar to the PFPG method, $\cP_+(Z)$ is only determined by the eigen-space spanned by the positive
eigenvectors of $Z$, so we are able to
use the polynomial filters to extract an approximation $U^k$. Therefore, the PFAM can be written as 
\begin{eqnarray}
    Z^{k+1} &=& \prox_{tG}(2\cP_+(\cP_{U^k}(Z^k)) - Z^k) - \cP_+(\cP_{U^k}(Z^k)) + Z^k, \label{eq:PFAM1}\\
    U^{k+1} &=& \orth(\rho_{k+1}^{q_{k+1}}(Z^{k+1})U^k), \label{eq:PFAM2}
\end{eqnarray}
where $U^k \in \bR^{n \times p}$ with $p \geq |\cI|$ is an orthogonal matrix and $q_k \geq 1$ is a small integer.

\section{Convergence Analysis} \label{sec:conv}

\subsection{Preliminary}

In this section we introduce some basic notations and tools that are used in our analysis. Firstly, we introduce the definition of principal angles to measure the distance between two subspaces.
\begin{definition}
    Let $X , Y \in \bR^{n \times p}$ be two orthogonal matrices. The singular values of $X^TY$ are $\sigma_1 \geq \sigma_2 \geq \cdots \geq \sigma_p$, which lie in $[0,1]$. The principal angle between $\rng(X)$ and $\rng(Y)$ is defined by
    \be
    \Theta(X, Y):= \mathrm{Diag}(\arccos \sigma_1,\ldots,\arccos \sigma_p),
    \ee
    In particular, we define $\sin \Theta$ by taking the sine of $\Theta$ componentwisely.
\end{definition}
%
%

%

The following lemma describes a very important property of Chebyshev polynomials.
\begin{lemma}\label{lem:cheb-est}
    The Chebyshev polynomials increase fast outside $[-1,1]$, i.e.,
\be
T_d(\pm (1+\epsilon)) \geq 2^{d \min \{\sqrt{\epsilon},1\}-1}, \ \forall \epsilon > 0. 
\ee
\end{lemma}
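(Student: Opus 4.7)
The plan is to use the closed-form representation of $T_d$ outside $[-1,1]$ given in \eqref{eq:cheb} and reduce everything to a one-variable inequality of the form $1+\sqrt{2}\sqrt{\epsilon}\ge 2^{\sqrt{\epsilon}}$ on $[0,1]$, which can then be dispatched by a short concavity argument. I first handle the $+$ case; the $-$ case will follow because $T_d(-t)=(-1)^d T_d(t)$, so the absolute value of $T_d(-(1+\epsilon))$ equals $T_d(1+\epsilon)$ (this is the form in which the bound is actually used when applying polynomial filters).

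\medskip

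\textbf{Step 1: drop one exponential branch.} Set $t=1+\epsilon>1$. Then $\sqrt{t^2-1}=\sqrt{\epsilon(2+\epsilon)}$, and $t-\sqrt{t^2-1}$ and $t+\sqrt{t^2-1}$ are reciprocal positive numbers. In particular both are positive, so \eqref{eq:cheb} gives
\begin{equation*}
T_d(1+\epsilon)\;\ge\;\tfrac{1}{2}\bigl(t+\sqrt{t^2-1}\bigr)^{d}
\;=\;\tfrac{1}{2}\bigl(1+\epsilon+\sqrt{\epsilon(2+\epsilon)}\bigr)^{d}.
\end{equation*}
Thus it suffices to prove $1+\epsilon+\sqrt{\epsilon(2+\epsilon)}\ge 2^{\min\{\sqrt{\epsilon},1\}}$, since raising to the $d$-th power and dividing by $2$ then yields the claimed estimate $2^{d\min\{\sqrt{\epsilon},1\}-1}$.

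\medskip

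\textbf{Step 2: case split on $\epsilon$.} When $\epsilon\ge 1$, the exponent on the right-hand side saturates at $1$, and the required inequality reduces to $1+\epsilon+\sqrt{\epsilon(2+\epsilon)}\ge 2$, which is obvious since already $1+\epsilon\ge 2$. When $0<\epsilon<1$, I lower-bound the square root by $\sqrt{\epsilon(2+\epsilon)}\ge\sqrt{2\epsilon}$ and drop the $\epsilon$ term, so it is enough to prove
\begin{equation*}
1+\sqrt{2}\,u\;\ge\;2^{u},\qquad u:=\sqrt{\epsilon}\in(0,1].
\end{equation*}

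\medskip

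\textbf{Step 3: the scalar inequality by concavity.} I define $g(u):=\log(1+\sqrt{2}u)-u\log 2$. A direct computation shows $g''(u)=-2/(1+\sqrt{2}u)^2<0$, so $g$ is strictly concave on $[0,1]$. Since $g(0)=0$ and $g(1)=\log\bigl((1+\sqrt{2})/2\bigr)>0$, concavity yields $g(u)\ge (1-u)g(0)+u\,g(1)=u\,g(1)\ge 0$ for every $u\in[0,1]$, which is precisely the needed inequality.

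\medskip

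The only mildly delicate point is Step 3: one might be tempted to try a Taylor expansion near $u=0$ (where $\sqrt{2}>\log 2$ makes things look easy) and a direct numerical check at $u=1$, but a uniform argument on the interval requires some global structure. Concavity of $g$ together with the two endpoint values is the cleanest way to cover the whole range $(0,1]$ without case work. The negative-argument case is then immediate from the parity identity $T_d(-t)=(-1)^{d}T_d(t)$, which turns the bound into an inequality for $|T_d(-(1+\epsilon))|$ matching the one we just proved.
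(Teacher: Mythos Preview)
Your proof is correct and follows essentially the same route as the paper: drop the small branch in \eqref{eq:cheb}, reduce to a scalar inequality for the base $1+\epsilon+\sqrt{\epsilon(2+\epsilon)}$, and finish with the concavity of $\log(1+\cdot)$ between the endpoints $0$ and $1$. The paper compresses your case split by writing the single inequality $\log(1+x)\ge \log 2\cdot\min\{x,1\}$ and uses the slightly weaker lower bound $1+\sqrt{\epsilon}$ instead of your $1+\sqrt{2}\sqrt{\epsilon}$, but the mechanism is identical; your explicit treatment of the sign in the $-(1+\epsilon)$ case via $T_d(-t)=(-1)^dT_d(t)$ is in fact more careful than the paper, which silently applies the bound as if for $|T_d|$.
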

\begin{proof}
It follows from the expression \eqref{eq:cheb} that
\beaa
T_d(\pm (1+\epsilon)) &\geq& \frac{1}{2} (1 + \epsilon + \sqrt{2\epsilon + \epsilon^2})^d 
\geq \frac{1}{2}(1+\sqrt{\epsilon})^d \\
&=& \frac{1}{2}e^{d\log(1+\sqrt{\epsilon})} \geq  2^{d \min \{\sqrt{\epsilon},1\}-1},
\eeaa
where the last inequality follows from that $\log(1+x) \geq \log 2 \cdot\min\{ x, 1\}$. 
\end{proof}

\subsection{Convergence analysis for PFPG}

In this subsection, we analyze the convergence for general convex problems. The relative gap is defined by
\begin{equation} \label{eqn:relative-gap}
\cG_k = \min_{i \in \cI_k}\left(\frac{|\lambda_{i}- (a_k+b_k)/2|}{(a_k-b_k)/2} - 1\right),
\end{equation}
where $[a_k, b_k]$ is the interval suppressed by Chebyshev polynomials at the $k$-th iteration, $\cI_k$ is the index set of the eigenvalues beyond $[a_k, b_k]$.
Throughout this section, we make the following assumptions.
\begin{assumption}\label{assump:gcvx}
    \begin{enumerate}[(i)]
        \item $\|\sin \Theta(V^{k+1}_{\cI_{k+1}},U^k)\|_2 < \gamma, \ \forall \ k$ with $\gamma < 1$, where
        $V^k_{\cI_{k}}$ and $U^k$ are defined in Assumption \ref{assump:f} and \eqref{eq:pfpg2} at iteration $k$.
        \item The sequence generated by iteration \eqref{eq:pfpg1} and \eqref{eq:pfpg2} are bounded, i.e.,
        $
        \|x^k\|_2 \leq C, \ \forall \ k. 
        $
        \item The relative gap \eqref{eqn:relative-gap} has a lower bound, i.e., $\cG_k \geq l, \ \forall \ k$. 
    \end{enumerate}
\end{assumption}
Assumption (i) implies that the initial eigen-space is not orthogonal to the
truely wanted eigen-space so that we can obtain $V^{k+1}_{\cI_{k+1}}$ by
performing subspace iterations on $U^k$. Essentially, this property is required by
almost all iterative eigensolvers for finding the correct eigenspace at each
iteration.   Assumption (ii) is commonly used in the optimization literature. Assumption (iii) is a key assumption which guarantees that the relative gap $\cG_k$ exists in the asymptotic sense. Therefore, the polynomial filters are able to separate the wanted
eigenvalues from the unwanted ones. In eigenvalue computation, this property is
often enforced by adding a sufficient number of guarding vectors so that the
 corresponding eigenvalue gap is large enough.


The following lemma gives an error estimation when using an inexact gradient of $F$.
It states that the polynomial degree $d_k$ should be proportional to
$\log(1/\varepsilon)$, where $\varepsilon$ is a given tolerance.
\begin{lemma}\label{lem:ctr1}
Suppose that Assumptions \ref{assump:f} and \ref{assump:gcvx} hold. Define the error between the exact and inexact gradients:
\be\label{eq:eps}
e^k = \mathcal{A}(\Phi(\cP_{U^k}(\cB(x^k))))
-\nabla F(x^k). 
\ee
Let the polynomial filter $\rho_k(t)$ be a Chebyshev polynomial with degree $d_k$.
To achieve $\|e^k\|_2 \leq \varepsilon$ for a given tolerance $0 < \varepsilon < 1$, the degree $d_k$ should satisfy  
\be\label{eq:poly-times}
    d_k = \Omega\left(\frac{\log \frac{1}{\varepsilon}}{\min\{l,1\}}\right).
\ee
\end{lemma}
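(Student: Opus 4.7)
The plan is to split the bound on $\|e^k\|_2$ into three factors: an operator-norm piece coming from $\mathcal{A}$, a Lipschitz piece coming from $\Phi$, and the geometric piece that measures how close the filtered subspace $U^k$ is to the target eigenspace $V^k_{\cI_k}$. Only the last one depends on the polynomial degree $d_k$, so the heart of the argument reduces to a single sine-of-principal-angle estimate.

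First I would substitute \eqref{eqn:grad-F-2nd} into the definition of $e^k$ so that
\[
e^k = \mathcal{A}\bigl(\Phi(\cP_{U^k}(\cB(x^k)))\bigr) - \mathcal{A}\bigl(\Phi(\cP_{V^k_{\cI_k}}(\cB(x^k)))\bigr).
\]
Using boundedness of $\mathcal{A}$ as a linear operator and the Lipschitz assumption on $\Phi$, this bounds $\|e^k\|_2$ by a constant multiple of $\|\cP_{U^k}(\cB(x^k)) - \cP_{V^k_{\cI_k}}(\cB(x^k))\|_F$. Next I would use the standard projection-difference identity: writing $\cP_{U}(X) - \cP_V(X) = (UU^T - VV^T)X\,UU^T + VV^T X (UU^T - VV^T)$ and invoking the classical fact $\|UU^T - VV^T\|_2 = \|\sin\Theta(U,V)\|_2$, we obtain
\[
\|\cP_{U^k}(\cB(x^k)) - \cP_{V^k_{\cI_k}}(\cB(x^k))\|_F \leq 2\,\|\cB(x^k)\|_F\,\|\sin\Theta(V^k_{\cI_k},U^k)\|_2.
\]
Assumption \ref{assump:gcvx}(ii) bounds $\|x^k\|_2 \leq C$, hence $\|\cB(x^k)\|_F \leq \|G\|_F + \|\mathcal{A}^*\|C$, a uniform constant. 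So up to a constant independent of $k$ we need $\|\sin\Theta(V^k_{\cI_k},U^k)\|_2 \leq c\varepsilon$.

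The key step is then the polynomial-filter estimate on this sine. By \eqref{eq:pfpg2} we have $U^k = \orth(\rho_k^{q_k}(\cB(x^k))U^{k-1})$, and Assumption \ref{assump:gcvx}(i) applied at index $k-1$ yields $\|\sin\Theta(V^k_{\cI_k}, U^{k-1})\|_2 \leq \gamma < 1$, so that $\|\tan\Theta(V^k_{\cI_k},U^{k-1})\|_2 \leq \gamma/\sqrt{1-\gamma^2}$. I would then invoke the classical subspace iteration estimate: because $\cB(x^k)$ and $\rho_k^{q_k}(\cB(x^k))$ share eigenvectors, the tangent of the principal angle between $V^k_{\cI_k}$ and the filtered iterate contracts by the factor
\[
\frac{\max_{\lambda \in [a_k,b_k]}|\rho_k^{q_k}(\lambda)|}{\min_{i \in \cI_k}|\rho_k^{q_k}(\lambda_i)|}.
\]
Since $\rho_k$ is the Chebyshev polynomial \eqref{eqn:cheb-linear-map} mapping $[a_k,b_k]$ to $[-1,1]$, the numerator is at most $1$, while by definition of the relative gap each desired eigenvalue $\lambda_i$ with $i\in\cI_k$ satisfies $|\rho_k(\lambda_i)| = T_{d_k}(\pm(1+\cG_k))$. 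Lemma \ref{lem:cheb-est} together with $\cG_k \geq l$ then gives the denominator lower bound $2^{q_k(d_k\min\{\sqrt{l},1\}-1)}$.

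Combining everything yields
\[
\|e^k\|_2 \leq \frac{K}{2^{q_k(d_k\min\{\sqrt{l},1\}-1)}},
\]
for a constant $K$ depending only on $\|\mathcal{A}\|$, $L_\Phi$, $C$, $G$, and $\gamma$. Setting the right-hand side at most $\varepsilon$ and solving for $d_k$ gives $d_k \geq (\log_2(K/\varepsilon)/q_k + 1)/\min\{\sqrt{l},1\}$, and since $\min\{\sqrt{l},1\} \geq \min\{l,1\}$ this is implied by the stated bound $d_k = \Omega(\log(1/\varepsilon)/\min\{l,1\})$. The main obstacle I foresee is not any single inequality but justifying the subspace-iteration contraction cleanly enough to isolate the Chebyshev ratio; in particular one must be careful that the relative-gap interval $[a_k,b_k]$ properly separates the wanted indices $\cI_k$ from the rest of the spectrum of $\cB(x^k)$, which is exactly what Assumption \ref{assump:gcvx}(iii) provides.
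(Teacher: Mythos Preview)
Your proposal is correct and follows essentially the same route as the paper: bound $\|e^k\|$ by boundedness of $\mathcal{A}$ and Lipschitz continuity of $\Phi$, reduce to the projection difference $\|\cP_{U^k}(\cB(x^k)) - \cP_{V^k_{\cI_k}}(\cB(x^k))\|_F$, control that by $2\|\cB(x^k)\|_F\|\sin\Theta(V^k_{\cI_k},U^k)\|_2$ via the identity $\|UU^T-VV^T\|_2=\|\sin\Theta(U,V)\|_2$, and finally invoke the subspace-iteration contraction together with Lemma~\ref{lem:cheb-est} to get the exponential decay in $d_k$. Your handling of the $\min\{\sqrt{l},1\}$ versus $\min\{l,1\}$ discrepancy is in fact cleaner than the paper's, which silently passes from one to the other.
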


\begin{proof}
The update \eqref{eq:cheb} can be regarded as one orthogonal
iteration. The eigenvalue of $\rho_k^{q_k}(\cB(x^k))$ is
$\rho_k^{q_k}(\lambda_i)$, where $\lambda_i$'s are the eigenvalues of $\cB(x^k)$.
According to Lemma \ref{lem:cheb-est}, the eigenvalues have the following distribution after we apply
the Chebyshev polynomial:
\[
    \min_{i \in \cI_k} \rho_k^{q_k}(\lambda_i) \geq 2^{q_kd_k\min\{\sqrt{l},1\}-q_k} \text{ and } \max_{i \notin \cI_k} \rho_k^{q_k}(\lambda_i) \leq 1. 
\]
It follows from \cite[Theorem 8.2.2]{golub2012matrix} that
\be\label{eq:err-oi}
    \|\sin\Theta(V_{\cI_k}^{k}, U^{k})\|_2 \leq \frac{2^{q_k}}{2^{q_kd_k\min\{l,1\}}} \frac{\gamma}{\sqrt{1-\gamma^2}}. 
    \ee
Due to the boundness of $\cA$ and the Lipschitz continuity of $\Phi$, we have
\be \label{eqn:ek}
\|e^k\|_2 \leq c_1\|\cP_{U^k}(\cB(x^k)) - \cP_{V_{\cI_k}^{k}}(\cB(x^k))\|_F,
\ee
where $c_1$ is a constant.
It is shown in \cite[Theorem 2.5.1]{golub2012matrix} that
$
\|\sin \Theta(X,Y)\|_2 = \|XX^T - YY^T\|_2.
$
Using this identity, we have
\begin{equation} \label{eqn:err-proj}
\begin{split}
   & \|\cP_{U^k}(\cB(x^k)) - \cP_{V_{\cI_k}^{k}}(\cB(x^k))\|_F \\
   \leq&  \|\cP_{U^k}(\cB(x^k)) - U^k(U^k)^T(\cB(x^k))V_{\cI_k}^k(V_{\cI_k}^k)^T \|_F 
   + \|U^k(U^k)^T(\cB(x^k))V_{\cI_k}^k(V_{\cI_k}^k)^T - \cP_{V_{\cI_k}^{k}}(\cB(x^k))\|_F \\
   \leq& 2||\cB(x^k)||_F\|\sin\Theta(V_{\cI_k}^{k}, U^{k})\|_2 \\
   \leq& \frac{c_2\cdot 2^{q_k}}{2^{q_kd_k\min\{l,1\}}},
\end{split}
\end{equation}
where $c_2$ is a constant depending on $\gamma$. The second inequality is due to
the fact that $\|AB\|_F \leq \|A\|_2\|B\|_F$ and the last inequality follows
from \eqref{eq:err-oi} and the boundedness of $\cA$ and $x^k$.
It is easy to verify that $\|e^k\|\leq \varepsilon$ if \eqref{eq:poly-times} holds for any $\varepsilon \in (0, 1)$.
This completes the proof.
\end{proof}

We claim that $\nabla F(x)$ is Lipschitz continuous due to the boundness of $\cA$ and $x^k$ and the Lipschitz continuity of $\Phi$. In the following part, we define the Lipschitz constant of $\nabla F(x)$ by $L$. The following theorem gives the convergence of the PFPG method. 
\begin{theorem}\label{thm:conv1}
Suppose that Assumptions \ref{assump:f} and \ref{assump:gcvx} hold.
Let $ \tau_k = \tau \leq \frac{1}{L}$ and  $\bar{x}_K = \frac{1}{K}\sum_{k=1}^K x^k$.
Then the convergence $\lim_{K \rightarrow \infty} h(\bar{x}^K) = h(x^*)$ holds if
\be
d_k = \Omega\left(\frac{\log k}{\min\{l,1\}}\right).
\ee

\end{theorem}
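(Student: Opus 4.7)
}
The plan is to treat the PFPG update \eqref{eq:pfpg1} as an inexact proximal gradient step with inexact gradient $g^k := \nabla F(x^k)+e^k$, where $e^k$ is the error defined in \eqref{eq:eps}. Then I would derive a one-step inequality of the form
\begin{equation*}
h(x^{k+1}) - h(x^\ast) \;\le\; \frac{1}{2\tau}\bigl(\|x^k-x^\ast\|_2^2 - \|x^{k+1}-x^\ast\|_2^2\bigr) + \langle e^k,\, x^\ast - x^{k+1}\rangle,
\end{equation*}
by combining (i) the descent lemma $F(x^{k+1})\le F(x^k)+\langle\nabla F(x^k),x^{k+1}-x^k\rangle + \tfrac{L}{2}\|x^{k+1}-x^k\|_2^2$, (ii) convexity of $F$ at $x^\ast$, (iii) the subgradient inequality for $R$ obtained from the optimality condition $(x^k-x^{k+1})/\tau - g^k \in \partial R(x^{k+1})$, and (iv) the polarization identity on the cross term, after which the quadratic residual $(-\tfrac{1}{2\tau}+\tfrac{L}{2})\|x^{k+1}-x^k\|_2^2$ is nonpositive because $\tau\le 1/L$.

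Next I would bound the error term using Cauchy--Schwarz and Assumption \ref{assump:gcvx}(ii) (boundedness of $\{x^k\}$, which also bounds $x^{k+1}$ via nonexpansiveness of the proximal operator and the boundedness of $\nabla F$), giving $\langle e^k, x^\ast - x^{k+1}\rangle \le M\|e^k\|_2$ for some constant $M$ independent of $k$. Summing the resulting inequality for $k=0,\dots,K-1$ telescopes the distance-to-optimum terms, so
\begin{equation*}
\sum_{k=1}^{K}\bigl(h(x^k)-h(x^\ast)\bigr) \;\le\; \frac{\|x^0-x^\ast\|_2^2}{2\tau} + M\sum_{k=0}^{K-1}\|e^k\|_2.
\end{equation*}
Dividing by $K$ and invoking convexity of $h$ to move the average inside, i.e.\ $h(\bar x^K)\le \tfrac{1}{K}\sum_{k=1}^K h(x^k)$, reduces the problem to showing $\tfrac{1}{K}\sum_{k=0}^{K-1}\|e^k\|_2\to 0$.

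The remaining task is to calibrate the polynomial degree. By Lemma \ref{lem:ctr1}, one can enforce $\|e^k\|_2\le \varepsilon_k$ by choosing $d_k = \Omega(\log(1/\varepsilon_k)/\min\{l,1\})$. Taking $\varepsilon_k = 1/k^{2}$ (or any $1/k^{p}$ with $p>1$) requires exactly $d_k=\Omega(\log k/\min\{l,1\})$, which is the hypothesis of the theorem; with this choice $\sum_k \|e^k\|_2<\infty$, so $\tfrac{1}{K}\sum_{k=0}^{K-1}\|e^k\|_2 \to 0$ and the first bound term $\|x^0-x^\ast\|_2^2/(2\tau K)\to 0$, yielding $h(\bar x^K)\to h(x^\ast)$.

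The main obstacle I anticipate is the handling of the error term cleanly: one must absorb the inner product $\langle e^k, x^\ast - x^{k+1}\rangle$ without losing control of the telescoping structure. In particular, simply using Young's inequality to move $\|e^k\|_2^2$ into the iterate-distance quadratic would force strong summability on $e^k$ already built into the distance bound; instead, the cleaner route is to keep the linear dependence on $\|e^k\|_2$, which is what the boundedness assumption (Assumption \ref{assump:gcvx}(ii)) buys us. A secondary subtlety is verifying that boundedness of $\{x^k\}$ carries to $\{x^{k+1}\}$ in the inexact setting; this follows from the nonexpansiveness of $\prox_{\tau R}$ combined with the Lipschitz continuity of $\nabla F$ and the already established bound $\|e^k\|_2\le \varepsilon_k\to 0$.
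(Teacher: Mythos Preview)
Your proposal is correct and follows essentially the same route as the paper: derive the one-step inequality $h(x^{k+1})-h(x^\ast)\le \tfrac{1}{2\tau}(\|x^k-x^\ast\|_2^2-\|x^{k+1}-x^\ast\|_2^2)+\langle e^k,x^\ast-x^{k+1}\rangle$ via the descent lemma, convexity, and the prox optimality condition, bound the inner product by $M\|e^k\|_2$ using Assumption~\ref{assump:gcvx}(ii), telescope, apply Jensen, and invoke Lemma~\ref{lem:ctr1} to control $\|e^k\|_2$. The only cosmetic difference is that the paper chooses $\varepsilon_k=1/k$ (so $\tfrac{1}{K}\sum_{k\le K}1/k\sim \tfrac{\log K}{K}\to 0$) whereas you take $\varepsilon_k=1/k^{p}$ with $p>1$; both are $\Omega(\log k/\min\{l,1\})$ in degree and both give the Ces\`aro limit, and your worry about boundedness of $x^{k+1}$ is moot since Assumption~\ref{assump:gcvx}(ii) already covers the full sequence.
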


\begin{proof}
For simplicity we define the noisy residual function by
\[
r_{\tau}(x,e) = \frac{1}{\tau}(x - \prox_{\tau R}(x-\tau (\nabla F(x)+e))).
\]
Then the update \eqref{eq:pfpg1} can be written as
\[
x^{k} - x^{k+1} = \tau_k r_{\tau_k}(x^k, e^k).
\]
According to the Lipschitz continuity of $\nabla F$, we have
\[
    h(x^{k+1}) \leq F(x^k) - \tau_k\nabla F(x^k)^T(r_{\tau_k}(x^k,e^k)) + \frac{L \tau_k^2}{2}\|r_{\tau_k}(x^k,e^k)\|_2^2 + R(x^{k+1}). \\
\]
By the convexity of $F(x)$ and $R(x)$ and $r_{\tau_k}(x,e^k)-\nabla F(x^k)-e^k \in \partial R(x^{k+1})$, we have
\beaa
    h(x^{k+1}) &\leq& F(x^*) + \nabla F(x^k)^T(x^k-x^*)  - \tau_k\nabla F(x^k)^T(r_{\tau_k}(x^k,e^k)) + \frac{L \tau_k^2}{2}\|r_{\tau_k}(x^k,e^k)\|_2^2 \\
    &&   + R(x^*) + (r_{\tau_k}(x^k,e^k)-\nabla F(x^k)-e^k)^T(x^{k+1}-x^*). 
\eeaa
Setting the step size $\tau_k = \tau \leq \frac{1}{L}$ yields 
\beaa
    h(x^{k+1}) - h(x^*) &\leq&  \nabla F(x^k)^T(x^k-x^*)  - \tau\nabla F(x^k)^T(r_{\tau}(x^k,e^k))  
  +  \frac{\tau}{2}\|r_{\tau}(x^k,e^k)\|_2^2 \\
  &&  + (r_{\tau}(x^k,e^k)-\nabla F(x^k)-e^k)^T(x^{k+1}-x^*).
\eeaa
After rearranging and applying the boundness of the sequence $x^k$, we have 
\beaa
    h(x^{k+1}) - h(x^*) &\leq& \frac{1}{2\tau}(\|x^{k}-x^*\|_2^2 - \|x^{k+1}-x^*\|_2^2) + (C + \|x^*\|)\|e^k\|_2,
\eeaa
where $C$ is a constant. Then it follows that
\begin{equation} \label{eqn:mean-conv}
h(\bar{x}^K) - h(x^*) \leq \frac{1}{K}\sum_{k=1}^K (h(x^{k}) - h(x^*)) \leq 
\frac{1}{2 K\tau}\|x^0 - x^*\|_2^2 + \frac{C+\|x^*\|}{K}\sum_{k=1}^K\|e^k\|_2.
\end{equation}
According to Lemma \ref{lem:ctr1}, by setting $d_k=\Omega\left(\frac{\log k}{\min\{l,1\}}\right)$ at iteration $k$, we have the upper bound $\sum_{k=1}^K\|e^k\|_2 \leq \sum_{k=1}^{K}1/k$. Thus the right hand side of \eqref{eqn:mean-conv} goes to zero as
$K\rightarrow \infty$, which completes the proof.
\end{proof}

The following theorem gives a linear convergence rate under the strongly convex
assumption of $F(x)$.
\begin{theorem}
    Suppose that Assumptions \ref{assump:f} and \ref{assump:gcvx} hold, and $F(x)$ is $\mu$-strongly convex.
    Let $ \tau_k = \tau \leq \frac{1}{L}$ and $w\leq \frac{\mu}{2}$. Then a linear
    convergence rate is achieved if 
	\begin{equation}
	d_k = \Omega\left(\frac{-\log (w\|x^{k+1}-x^*\|_2)}{\min\{l,1\}}\right).
	\end{equation}
\end{theorem}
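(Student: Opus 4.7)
The plan is to adapt the proof of Theorem \ref{thm:conv1} by invoking the $\mu$-strong convexity of $F$ so as to upgrade the ergodic $O(1/K)$ bound into a one-step contraction, and then to calibrate the polynomial degree $d_k$ so that the inexact-gradient error $\|e^k\|_2$ remains proportional to $\|x^{k+1}-x^*\|_2$ and can be absorbed into the strong-convexity term.

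Concretely, I would retrace the argument of Theorem \ref{thm:conv1} up to the line where plain convexity of $F$ at $x^*$ is invoked, but replace that step by the stronger inequality
$F(x^k) \le F(x^*) + \nabla F(x^k)^T(x^k-x^*) - \tfrac{\mu}{2}\|x^k-x^*\|_2^2.$
Carrying the extra $-\tfrac{\mu}{2}\|x^k-x^*\|_2^2$ through the same algebraic manipulations, and dropping the nonnegative term $h(x^{k+1})-h(x^*)$ on the left-hand side, I expect to arrive at an inequality of the form
$\|x^{k+1}-x^*\|_2^2 \le (1-\mu\tau)\|x^k-x^*\|_2^2 + 2\tau\|e^k\|_2\,\|x^{k+1}-x^*\|_2,$
where the crude bound $\|x^{k+1}-x^*\|_2 \le C+\|x^*\|_2$ used in Theorem \ref{thm:conv1} is deliberately \emph{not} applied, so that the error term remains multiplicative in $\|x^{k+1}-x^*\|_2$. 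Requiring $\|e^k\|_2 \le w\|x^{k+1}-x^*\|_2$ then translates, via Lemma \ref{lem:ctr1} with tolerance $\varepsilon = w\|x^{k+1}-x^*\|_2$, into exactly the degree condition $d_k = \Omega\bigl(-\log(w\|x^{k+1}-x^*\|_2)/\min\{l,1\}\bigr)$ stated in the theorem. Substituting back gives
$(1-2w\tau)\|x^{k+1}-x^*\|_2^2 \le (1-\mu\tau)\|x^k-x^*\|_2^2,$
and since $w\le \mu/2$ with $\tau\le 1/L$, the contraction factor $(1-\mu\tau)/(1-2w\tau)$ is at most some $q<1$. Iterating this bound yields $\|x^k-x^*\|_2 \le q^{k/2}\|x^0-x^*\|_2$, i.e., a linear rate.

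The main obstacle I foresee is the implicit character of the hypothesis: $d_k$ is prescribed in terms of $\|x^{k+1}-x^*\|_2$, yet $x^{k+1}$ itself depends on $d_k$ through $e^k$. Because the error bound in Lemma \ref{lem:ctr1} is continuous and strictly decreasing in $d_k$, existence of an admissible degree follows from a simple monotonicity/continuity argument; for a practical implementation one replaces $\|x^{k+1}-x^*\|_2$ by the computable surrogate $\|x^k-x^*\|_2$ or a prox residual, which only affects the hidden constant inside $\Omega(\cdot)$. A secondary subtlety is the boundary case $w=\mu/2$, where the factor $(1-\mu\tau)/(1-2w\tau)$ collapses to $1$; a genuinely strict contraction needs $w<\mu/2$, but this slack is absorbed into the hidden constant in the $\Omega(\cdot)$ statement and does not alter the asymptotic requirement on $d_k$.
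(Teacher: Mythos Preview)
Your proposal is correct and follows essentially the same route as the paper: replace convexity by $\mu$-strong convexity in the descent estimate, drop $h(x^{k+1})-h(x^*)\ge 0$, obtain $\|x^{k+1}-x^*\|_2^2 \le (1-\mu\tau)\|x^k-x^*\|_2^2 + 2\tau\|e^k\|_2\|x^{k+1}-x^*\|_2$, enforce $\|e^k\|_2\le w\|x^{k+1}-x^*\|_2$ via Lemma~\ref{lem:ctr1}, and read off the contraction factor $\eta=(1-\mu\tau)/(1-2w\tau)$. Your remarks on the implicit dependence of $d_k$ on $x^{k+1}$ and on the degenerate boundary $w=\mu/2$ are valid caveats that the paper does not raise.
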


\begin{proof}
    The quadratic bound in the proof of Theorem \ref{thm:conv1} gives
\[
    h(x^{k+1}) \leq F(x^k) - \tau_k\nabla F(x^k)^T(r_{\tau_k}(x^k,e^k)) + \frac{L \tau_k^2}{2}\|r_{\tau_k}(x^k,e^k)\|_2^2 + R(x^{k+1}). \\
\]
By the stongly convexity of $F(x)$, the convexity of $R(x)$ and $r_{\tau_k}(x^k,e^k)-\nabla F(x^k)-e^k \in \partial R(x^{k+1})$, we have
\beaa
h(x^{k+1}) &\leq& F(x^*) + \nabla F(x^k)^T(x^k-x^*) - \frac{\mu}{2}\|x^k-x^*\|_2^2 - \tau_k\nabla F(x^k)^T(r_{\tau_k}(x^k,e^k)) \\
&& + \frac{L \tau_k^2}{2}\|r_{\tau_k}(x^k,e^k)\|_2^2 
       + R(x^*) + (r_{\tau_k}(x^k,e^k)-\nabla F(x^k)-e^k)^T(x^{k+1}-x^*). 
\eeaa
Under the choice of step size $\tau_k = \tau \leq \frac{1}{L}$, we have 
\beaa
    h(x^{k+1}) - h(x^*) &\leq&  \nabla F(x^k)^T(x^k-x^*)  - \tau\nabla F(x^k)^T(r_{\tau}(x^k,e^k))  
  +  \frac{\tau}{2}\|r_{\tau}(x^k,e^k)\|_2^2 \\
  &&- \frac{\mu}{2}\|x^k-x^*\|_2^2   + (r_{\tau}(x^k,e^k)-\nabla F(x^k)-e^k)^T(x^{k+1}-x^*) \\
  &=& \frac{1}{2\tau}(( 1-\mu \tau) \|x^{k}-x^*\|_2^2 - \|x^{k+1}-x^*\|_2^2) - (e^k)^T(x^{k+1}-x^*) \\
  &\leq& \frac{1}{2\tau}(( 1-\mu \tau) \|x^{k}-x^*\|_2^2 - \|x^{k+1}-x^*\|_2^2) + \|e^k\|_2\|x^{k+1}-x^*\|_2.
  \eeaa
Together with $h(x^{k+1}) \geq h(x^*)$, we have
\begin{equation} \label{eqn:recurr-lin}
    \|x^{k+1}-x^*\|_2^2 \leq ( 1-\mu \tau) \|x^{k}-x^*\|_2^2 + 2\tau\|e^k\|_2\|x^{k+1}-x^*\|_2.
\end{equation}
The choice of $d_k$ guarantees
$
\|e^k\|_2 \leq w\|x^{k+1}-x^*\|_2.
$
Plugging this upper bound of $\|e^k\|$ into \eqref{eqn:recurr-lin} yields 
\begin{equation*}
\|x^{k+1} - x^*\|^2_2 \leq \eta \|x^k - x^*\|_2^2,
\end{equation*}
where $\eta =\frac{ 1-\mu \tau}{1 - 2w \tau} < 1 $, which completes the proof.
\end{proof}

\subsection{Warm-start analysis for PFPG}
In this section, we analyze the linear convergence of PFPG under the warm-start setting. 
Suppose that $\rho_k(\lambda_{s_i}(\cB(x^{k}))$ are in decreasing order, and define 
\begin{equation} \label{eqn:etak}
	\eta_k = \frac{\rho_k(\lambda_{s_{p+1}}(\cB(x^{k}))}{\rho_k(\lambda_{s_p}(\cB(x^{k}))}.
\end{equation}
We should point out that under Assumption \ref{assump:g}, $\eta_k$ is strictly
smaller than 1 since
$\cG_k \geq l > 0$.
An additional assumption is needed if we intend to refine the subspace from the previous one.
\begin{assumption}\label{assump:g}
    $\|\sin\Theta(V^{k+1}_{\cI_{k+1}}, V^{k}_{\cI_{k}})\|_2 \leq c_1\|x^{k+1} - x^{k}\|_2$ for all $k$.
\end{assumption}
Assumption \ref{assump:g} means that the eigen-space at two consecutive iteration points are close enough. This is necessary since we use the eigen-space at the previous step as the initial value of polynomial filter. This assumption is satisfied when the gap is large enough by the Davis-Kahan $\sin \Theta$ theorem \cite{davis1970rotation}. 

%
%

The next lemma shows the relationship between the two iterations in a recursive form.
It plays a key role in the proof of the main convergence theorem.
\begin{lemma}\label{lem:recur}
    Suppose that Assumptions \ref{assump:f}, \ref{assump:gcvx} and \ref{assump:g} hold. 
    Let $\cX$ be the set of optimal solutions of the problem \eqref{eqn:mat-opt}, 
    and $\mathrm{dist}(x, \cX) = \inf_{y\in \cX}\|x - y\|_2$ be the distance between $x$
    and $\cX$. Then we have
\be \label{eqn:recr1}
\|\sin\Theta(V^{k+1}_{\cI_{k+1}}, U^{k+1})\|_2 \leq 
\eta_{k+1}^{q_k}(c_2\|\sin\Theta(V^{k}_{\cI_{k}}, U^k)\|_2 + c_3\dist(x^k,\cX)), 
\ee
where $c_2$ and $c_3$ are constants. 
\end{lemma}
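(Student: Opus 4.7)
The plan is to track how $U^{k+1}$ relates to the target subspace $V^{k+1}_{\cI_{k+1}}$ by splitting the error into three pieces: (a) the contraction produced by the Chebyshev filter applied to $\cB(x^{k+1})$, (b) the mismatch between $V^{k+1}_{\cI_{k+1}}$ and $V^{k}_{\cI_{k}}$ created by the outer iteration moving from $x^k$ to $x^{k+1}$, and (c) the residual angle $\|\sin\Theta(V^{k}_{\cI_{k}}, U^k)\|_2$ already present in the warm start. Pieces (a) and (c) are handled by the same subspace-iteration machinery used in Lemma \ref{lem:ctr1}; piece (b) is controlled by Assumption \ref{assump:g} together with a bound on $\|x^{k+1}-x^k\|_2$.

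First I would invoke \cite[Theorem 8.2.2]{golub2012matrix} for the orthogonal iteration $U^{k+1}=\orth(\rho_{k+1}^{q_{k+1}}(\cB(x^{k+1}))U^k)$. Since the eigenvalues of the filtered operator inside and outside $\cI_{k+1}$ are separated by the ratio $\eta_{k+1}$ defined in \eqref{eqn:etak}, and since $\|\sin\Theta(V^{k+1}_{\cI_{k+1}},U^k)\|_2<1$ (via Assumption \ref{assump:gcvx}(i) and the triangle inequality), this step gives
\begin{equation*}
\|\sin\Theta(V^{k+1}_{\cI_{k+1}}, U^{k+1})\|_2 \;\leq\; \frac{\eta_{k+1}^{q_{k+1}}}{\sqrt{1-\gamma^2}}\,\|\sin\Theta(V^{k+1}_{\cI_{k+1}}, U^k)\|_2 .
\end{equation*}
Next I would apply a triangle inequality for principal angles (one may use the identity $\|\sin\Theta(X,Y)\|_2=\|XX^T-YY^T\|_2$ from \cite[Theorem 2.5.1]{golub2012matrix} to reduce it to a Frobenius/spectral triangle inequality) to obtain
\begin{equation*}
\|\sin\Theta(V^{k+1}_{\cI_{k+1}}, U^k)\|_2 \;\leq\; \|\sin\Theta(V^{k+1}_{\cI_{k+1}}, V^{k}_{\cI_{k}})\|_2 + \|\sin\Theta(V^{k}_{\cI_{k}}, U^k)\|_2,
\end{equation*}
and by Assumption \ref{assump:g} the first summand is at most $c_1\|x^{k+1}-x^k\|_2$.

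Finally I would bound $\|x^{k+1}-x^k\|_2$ in terms of $\dist(x^k,\cX)$ and the current subspace error. Let $\bar x^k \in \cX$ attain $\dist(x^k,\cX)$. Using the non-expansivity of $\prox_{\tau R}$, the fixed-point identity $\bar x^k=\prox_{\tau R}(\bar x^k-\tau \nabla F(\bar x^k))$, and Lipschitzness of $\nabla F$, one gets
\begin{equation*}
\|x^{k+1}-\bar x^k\|_2 \;\leq\; (1+\tau L)\dist(x^k,\cX)+\tau\|e^k\|_2,
\end{equation*}
and then by another triangle inequality $\|x^{k+1}-x^k\|_2\leq (2+\tau L)\dist(x^k,\cX)+\tau\|e^k\|_2$. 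The error $\|e^k\|_2$ is estimated exactly as in \eqref{eqn:ek}--\eqref{eqn:err-proj} of Lemma \ref{lem:ctr1} by $c\|\sin\Theta(V^{k}_{\cI_{k}},U^k)\|_2$. Substituting back and absorbing $\eta_{k+1}^{q_{k+1}}/\sqrt{1-\gamma^2}$ into constants yields the claimed recursion with
\begin{equation*}
c_2 \sim \frac{1+c_1\tau c}{\sqrt{1-\gamma^2}},\qquad c_3 \sim \frac{c_1(2+\tau L)}{\sqrt{1-\gamma^2}}.
\end{equation*}

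The main obstacle is step (b): the polynomial filter is evaluated at $\cB(x^{k+1})$, whereas the warm-start subspace $U^k$ was generated to approximate the eigenspace of $\cB(x^k)$, so one cannot directly appeal to a single-matrix subspace-iteration estimate. Assumption \ref{assump:g} is precisely what allows this mismatch to be charged to $\|x^{k+1}-x^k\|_2$, and then the proximal gradient step together with the already-established error bound for $e^k$ closes the loop. A secondary care point is maintaining the uniform bound $\|\sin\Theta(V^{k+1}_{\cI_{k+1}},U^k)\|_2<1$ so that the $1/\sqrt{1-\gamma^2}$ factor from \cite[Theorem 8.2.2]{golub2012matrix} remains a true constant; this is where Assumption \ref{assump:gcvx}(i) is used in an essential way.
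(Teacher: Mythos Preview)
Your proposal is correct and follows essentially the same architecture as the paper's proof: apply \cite[Theorem 8.2.2]{golub2012matrix} to the filtered subspace iteration, split $\|\sin\Theta(V^{k+1}_{\cI_{k+1}},U^k)\|_2$ by the triangle inequality, invoke Assumption~\ref{assump:g}, and then bound $\|x^{k+1}-x^k\|_2$ in terms of $\dist(x^k,\cX)$ and $\|e^k\|_2$, with $\|e^k\|_2$ controlled as in Lemma~\ref{lem:ctr1}.

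The one substantive difference is in how you bound $\|x^{k+1}-x^k\|_2$. The paper writes $x^k-x^{k+1}=\tau_k r_{\tau_k}(x^k,e^k)$ directly, splits the residual as $\|r_{\tau_k}(x^k,e^k)-r_{\tau_k}(x^k,0)\|_2+\|r_{\tau_k}(x^k,0)\|_2$, bounds the first term by $\|e^k\|_2$ via nonexpansiveness of the prox, and bounds the second by $\omega\,\dist(x^k,\cX)$ using cocoercivity of the exact proximal-gradient residual. Your route via the fixed-point identity $\bar x^k=\prox_{\tau R}(\bar x^k-\tau\nabla F(\bar x^k))$ plus Lipschitzness of $\nabla F$ is equally valid and arguably more elementary, at the cost of slightly looser constants ($2+\tau L$ in place of the paper's cocoercivity modulus $\omega$). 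Either way the resulting recursion has the same form, so the downstream Theorem~\ref{thm:lr} is unaffected.
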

\begin{proof}
The update \eqref{eqn:subspace-update} can be seen as one iteration in the orthogonal iteration. Then it follows from \cite[Theorem 8.2.2]{golub2012matrix} that
\bee
\|\sin\Theta(V^{k+1}_{\cI_{k+1}}, U^{k+1})\|_2 \leq \eta_{k+1}^{q_k} \|\tan\Theta(V^{k+1}_{\cI_{k+1}}, U^k)\|_2 \leq \frac{\eta_{k+1}^{q_k}}{\sqrt{1-\gamma^2}} \|\sin\Theta(V^{k+1}_{\cI_{k+1}}, U^k)\|_2, 
\eee
According to \eqref{eqn:ek} and \eqref{eqn:err-proj} in Lemma \ref{lem:ctr1}, we have
\beaa
\|e^k\|_2 \leq c_4\|\sin\Theta(V^{k}_{\cI_{k}}, U^k)\|_2,
\eeaa
where $c_4$ is a constant. 
The upper bound of $\|x^k - x^{k+1}\|_2$ is given by
\beaa
&&\|x^k - x^{k+1}\|_2 \leq \tau_k(\|r_{\tau_k}(x^k,e^k) - r_{\tau_k}(x^k,0)\|_2 + \|r_{\tau_k}(x^k,0)\|_2)  \\
&\leq& \tau_k\|e^k\|_2 + \omega \dist(x^k, \cX) \\
&\leq& \tau_kc_4\|\sin\Theta(V_{\cI_{k}}^{k}, U^{k})\|_2 + \omega\dist(x^k,\cX),
\eeaa
where the second inequality is due to the monotonity of the proximal operator and the cocoercivity with modulus $\omega$ of the residual function $r_{\tau_k}(x^k, 0)$.
Finally, we obtain
\beaa
&&\|\sin\Theta(V^{k+1}_{\cI_{k+1}}, U^{k+1})\|_2 \leq \frac{\eta_{k+1}^{q_k}}{\sqrt{1-\gamma^2}} \|\sin\Theta(V^{k+1}_{\cI_{k+1}}, U^k)\|_2   
 \\
 &\leq& \frac{\eta_{k+1}^{q_k}}{\sqrt{1-\gamma^2}} (\|\sin\Theta(V^{k}_{\cI_{k}}, U^k)\|_2+\|\sin\Theta(V^{k+1}_{\cI_{k+1}}, V^{k}_{\cI_{k}}\|_2) \\
&\leq&  \frac{\eta_{k+1}^{q_k}}{\sqrt{1-\gamma^2}}(\|\sin\Theta(V^{k}_{\cI_{k}}, U^k)\|_2 + c_1\|x^k - x^{k+1}\|_2) \\
&\leq& \frac{\eta_{k+1}^{q_k}}{\sqrt{1-\gamma^2}} ((1+\tau_kc_1c_4)\|\sin\Theta(V^{k}_{\cI_{k}}, U^k)\|_2 + c_1\omega\dist(x^k,\cX)), \\
\eeaa
where the third inequality is from (iii) in Assumption \ref{assump:g}. This completes the proof. 
\end{proof}

Lemma \ref{lem:recur} means that the principle angle between $V^{k+1}_{\cI_{k+1}}$ and $U^{k+1}$ are
determined by the angle in the previous iteration and how far $x^k$ is from the optimal set $\cX$.
As an intuitive interpretation, the polynomial-filtered subspace becomes
more and more accurate when PFPG is close to converge.

We are now ready to state the main result. It implies that the PFPG method has
a linear convergence rate if the polynomial degree is a constant, independent
of the iteration $k$.
\begin{theorem}\label{thm:lr}
    Suppose that Assumptions \ref{assump:f}, \ref{assump:gcvx} and \ref{assump:g} holds, and the exact proximal 
    gradient method for \eqref{eqn:mat-opt} has a linear convergence rate, i.e.,
	\be
    \dist(\mathrm{prox}_{\tau R}(x^{k}- \tau_k \nabla F(x^k)),\cX) \leq	\nu\dist(x^{k},\cX), \nu \in (0,1). 
	\ee
    If $\eta_{k+1}$ \eqref{eqn:etak} satisfies  
    \be\label{eq:cond}
    \frac{\nu + \eta_{k+1}^{q_k}c_3}{2} + \sqrt{\left(\frac{\nu+\eta_{k+1}^{q_k} c_3}{2}\right)^2+\eta_{k+1}^{q_k}(\tau_k c_2 c_4 -\nu c_3)} < \rho < 1,
    \ee
    where $c_2,c_3,c_4$ are constants in Lemma \ref{lem:recur},  
	then the PFPG method has a linear convergence rate.
\end{theorem}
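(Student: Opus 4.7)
The plan is to reduce linear convergence to a coupled two-dimensional linear recursion in
$a_k := \dist(x^k, \cX)$ and $s_k := \|\sin\Theta(V^k_{\cI_k}, U^k)\|_2$, and then recognize the left-hand side of \eqref{eq:cond} as the Perron eigenvalue of the resulting $2\times 2$ nonnegative matrix. Lemma \ref{lem:recur} already supplies one row of the recursion,
\[
s_{k+1} \leq \eta_{k+1}^{q_k}\bigl(c_2 s_k + c_3 a_k\bigr);
\]
the missing ingredient is an analogous bound on $a_{k+1}$.

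To get that bound, I would write $x^{k+1} = \prox_{\tau R}(x^k - \tau(\nabla F(x^k) + e^k))$, apply non-expansiveness of the proximal operator together with the triangle inequality, and use the hypothesized linear rate of the exact iteration to obtain
\[
a_{k+1} \leq \dist\bigl(\prox_{\tau R}(x^k - \tau\nabla F(x^k)), \cX\bigr) + \tau \|e^k\|_2 \leq \nu\, a_k + \tau c_4 s_k,
\]
where $\|e^k\|_2 \leq c_4 s_k$ is the error estimate established inside the proof of Lemma \ref{lem:recur}. Stacking this with Lemma \ref{lem:recur} yields the entrywise vector inequality
\[
\begin{pmatrix} a_{k+1} \\ s_{k+1} \end{pmatrix} \leq M_k \begin{pmatrix} a_k \\ s_k \end{pmatrix}, \qquad M_k := \begin{pmatrix} \nu & \tau c_4 \\ \eta_{k+1}^{q_k} c_3 & \eta_{k+1}^{q_k} c_2 \end{pmatrix}.
\]
A short completion-of-the-square calculation identifies the larger eigenvalue of this nonnegative matrix with the left-hand side of \eqref{eq:cond} (the labels $c_2$ and $c_3$ appear transposed between Lemma \ref{lem:recur} and the theorem statement, which looks like a typo that does not affect the argument), so by hypothesis $\lambda_+(M_k) \leq \rho < 1$ uniformly in $k$.

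To convert this spectral bound into a scalar linear contraction I would invoke a Perron-type weighting. By Assumption \ref{assump:gcvx}(iii) together with the remark preceding Assumption \ref{assump:g}, the ratio $\eta_{k+1}^{q_k}$ is uniformly bounded above by some $\bar\eta < 1$, so a single constant matrix $\bar M$ obtained by replacing $\eta_{k+1}^{q_k}$ with $\bar\eta$ majorizes every $M_k$ entrywise; picking the strictly positive Perron eigenvector $v=(v_1,v_2)^T$ of $\bar M$ and setting $\phi_k := \max\{a_k/v_1,\, s_k/v_2\}$ gives $\phi_{k+1} \leq \rho \phi_k$, hence $\dist(x^k,\cX) = O(\rho^k)$. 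The only substantive obstacle is this $k$-dependence of $M_k$ (handled by the uniform gap assumption); the bound $\|e^k\|_2 \leq c_4 s_k$ is the bridge that reshapes the classical inexact-proximal-gradient argument into one driven purely by the polynomial-filter error $s_k$.
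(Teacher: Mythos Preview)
Your approach is essentially the same as the paper's: both derive the bound $\dist(x^{k+1},\cX)\le \nu\,\dist(x^k,\cX)+\tau_k c_4\,\|\sin\Theta(V^k_{\cI_k},U^k)\|_2$ via non-expansiveness of the prox and the error estimate $\|e^k\|_2\le c_4 s_k$, couple it with Lemma~\ref{lem:recur} into the same $2\times 2$ nonnegative recursion, and identify the left side of \eqref{eq:cond} as the Perron eigenvalue of that matrix (your observation about the $c_2/c_3$ transposition between Lemma~\ref{lem:recur} and the theorem is correct). Your Perron-weighting step, majorizing all $M_k$ by a single $\bar M$ and contracting in the norm $\phi_k=\max\{a_k/v_1,s_k/v_2\}$, is in fact more careful than the paper, which simply asserts that the spectral-radius bound on each $R^k$ yields linear convergence without addressing the $k$-dependence.
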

\begin{proof}
    Since $\cX$ is closed, we denote $x_{prj}$ as the projection of $\mathrm{prox}_{\tau_k R}(x^k-\tau_k\nabla F(x^k))$ onto the optimal set $\cX$. Then we have
\begin{equation}
\begin{split}\label{eqn:recr2}
&\dist(x^{k+1},\cX) 
\leq \|\prox_{\tau_k R}(x^{k}- \tau_k (\nabla F(x^k)+e^k)) -x_{prj}\|_2  \\
\leq&    \|\prox_{\tau_k R}(x^{k}- \tau_k\nabla F(x^k))  - \prox_{\tau_k R}(x^{k}- \tau_k(\nabla F(x^k) + e^k)) \|_2 \\
& + \|\prox_{\tau_k R}(x^{k}- \tau_k\nabla F(x^k)) - x_{prj}\|_2 \\
\leq&  \dist(\prox_{\tau_k R}(x^{k}- \tau_k \nabla F(x^k)),\cX) + \tau_k \|e^k\|_2  \\
\leq& \dist(\prox_{\tau_k R}(x^{k}- \tau_k \nabla F(x^k)),\cX) + \tau_k c_4\|\sin\Theta(V^{k}_{\cI_{k}}, U^k)\|_F \\
\leq& \nu\dist(x^{k},\cX) + \tau_kc_4\|\sin\Theta(V_{\cI_{k}}^{k}, U^{k})\|_F,\
\end{split}
\end{equation}
where the first inequality is from the definition of the distance function, the second inequality is from the triangle inequality, the third inequality is shown in the proof of Lemma \ref{lem:recur} and the last inequality is due to the linear convergence of the proximal gradient method. 

From \eqref{eqn:recr1} and \eqref{eqn:recr2} we observe that the error $\dist(x^{k},\cX)$
and $\|\sin\Theta(V_{\cI_{k}}^{k}, U^{k})\|_2$ are coupled together. Thus we 
define the error vector 
\begin{equation}
s^k = \left[\begin{matrix}
\dist(x^{k},\cX) \\
\|\sin\Theta(V_{\cI_{k}}^{k}, U^{k})\|_2
\end{matrix}\right].
\end{equation}
The two recursive formula can be rewritten as
\be
s^{k+1} \leq R^k s^k,
\
R^k =
\left[
\begin{matrix}
    \nu & \tau_kc_4 \\
    \eta_{k+1}^{q_k} c_2 & \eta_{k+1}^{q_k} c_3 \\
\end{matrix} \right].
\ee
The spectral radius of $R^k$ is bounded by the left term in \eqref{eq:cond}. It
is easy to verify that \eqref{eq:cond} holds if the degree $d_k$ is large enough
to make $\eta_{k+1}$ sufficiently small. The choice of $d_k$ is independent of
the iteration number $k$. This completes the proof.
\end{proof}

Theorem \ref{thm:lr} requires the linear convergence rate of the exact proximal gradient
method. This condition is satisfied if $F(x)$ is strongly convex and smooth.
From the proof we also observe that $\dist(x^{k},\cX)$ and $\|\sin\Theta(V_{\cI_{k}}^{k}, U^{k})\|_2$
are recursively bounded by each other. By choosing suitable $d_k$, the two errors
are able to decay simultaneously.

\subsection{Convergence analysis for PFAM}
We next analyze the convergence of PFAM. 
Similar to PFPG, we make the following assumptions:
\begin{assumption}\label{assump:PFAM}
	Let $V_{\cI_{k}}^k$ be an orthogonal matrix whose columns are the eigenvectors corresponding to
	all positive eigenvalues of $Z^k$.
    \begin{enumerate}[(i)]
        \item $\|\sin \Theta(V^{k+1}_{\cI_{k+1}},U^k)\|_2 < \gamma, \ \forall \ k$ with $\gamma < 1$.
        \item The sequence generated by  \eqref{eq:PFAM1} and \eqref{eq:PFAM2} is bounded, i.e.,
        $
        \|Z^k\|_F \leq C, \ \forall \ k. 
        $
        \item The relative gap \eqref{eqn:relative-gap} has a lower bound, i.e., $\cG_k \geq l, \ \forall \ k$. 
    \end{enumerate}
\end{assumption}

The main theorem is established by following the proof in \cite{davis2016convergence}. 
\begin{theorem}\label{thm:PFAM}
Suppose Assumption \ref{assump:PFAM} holds.
The convergence $\|Z^k - T_{\textrm{DRS}}(Z^k)\|_F = \cO(1/\sqrt{k})$ is archived
if
\be
d_k = \Omega\left(\frac{\log k}{\min\{l,1\}}\right).
\ee
\end{theorem}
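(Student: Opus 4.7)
The plan is to mirror the argument used for PFPG in Theorem \ref{thm:conv1}: first bound the per-iteration discrepancy between the inexact PFAM update and the exact Douglas-Rachford operator by a scalar error $\|e^k\|_F$ that we can control via the polynomial degree, and then invoke the standard Fej\'er/Krasnosel'ski\u{\i}-Mann style analysis of DRS from \cite{davis2016convergence} in its inexact form. Specifically, writing $Z^{k+1}_{\text{exact}} := T_{\textrm{DRS}}(Z^k) = \prox_{tG}(2\cP_+(Z^k) - Z^k) - \cP_+(Z^k) + Z^k$, the difference with the PFAM update in \eqref{eq:PFAM1} is
\begin{equation*}
Z^{k+1} - Z^{k+1}_{\text{exact}} = \bigl[\prox_{tG}(2\cP_+(\cP_{U^k}(Z^k))-Z^k) - \prox_{tG}(2\cP_+(Z^k)-Z^k)\bigr] - \bigl[\cP_+(\cP_{U^k}(Z^k)) - \cP_+(Z^k)\bigr].
\end{equation*}
Since $\prox_{tG}$ and $\cP_+$ are both nonexpansive, this gives the clean bound $\|Z^{k+1} - T_{\textrm{DRS}}(Z^k)\|_F \leq 3\|\cP_+(\cP_{U^k}(Z^k)) - \cP_+(Z^k)\|_F =: 3\|e^k\|_F$.

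Next I would prove an analogue of Lemma \ref{lem:ctr1} for PFAM: since $\cP_+$ depends only on the eigenvectors associated with the positive eigenvalues, the same chain of inequalities as in \eqref{eqn:err-proj}, using $\|\sin\Theta(X,Y)\|_2 = \|XX^T-YY^T\|_2$ together with Lemma \ref{lem:cheb-est} and part~(i) of Assumption \ref{assump:PFAM}, yields
\begin{equation*}
\|e^k\|_F \;\leq\; C\,\|\sin\Theta(V_{\cI_k}^k, U^k)\|_2 \;\leq\; \frac{C'\cdot 2^{q_k}}{2^{q_k d_k \min\{l,1\}}},
\end{equation*}
where boundedness of $\|Z^k\|_F$ (part (ii)) handles the prefactor. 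Choosing $d_k = \Omega(\log k/\min\{l,1\})$ therefore guarantees $\|e^k\|_F \leq C''/k$, and in particular $\sum_{k\ge 1}\|e^k\|_F < \infty$ and $\sum_{k\ge 1}\|e^k\|_F^2 < \infty$.

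With the summability of the errors secured, I would then follow the DRS convergence argument of \cite{davis2016convergence}. The key identity there expresses $\|Z^{k+1}_{\text{exact}} - Z^\star\|_F^2 \leq \|Z^k - Z^\star\|_F^2 - c\|Z^k - T_{\textrm{DRS}}(Z^k)\|_F^2$ for any fixed point $Z^\star$, so substituting $Z^{k+1} = T_{\textrm{DRS}}(Z^k) + \epsilon^k$ with $\|\epsilon^k\|_F \leq 3\|e^k\|_F$ and expanding the square gives
\begin{equation*}
\|Z^{k+1} - Z^\star\|_F^2 \leq \|Z^k - Z^\star\|_F^2 - c\|Z^k - T_{\textrm{DRS}}(Z^k)\|_F^2 + 2\|\epsilon^k\|_F\|Z^{k+1}-Z^\star\|_F + \|\epsilon^k\|_F^2.
\end{equation*}
Boundedness of $\{Z^k\}$ and summability of $\|\epsilon^k\|_F$ allow the perturbation terms to be absorbed and telescoped, producing $\sum_{k=1}^K \|Z^k - T_{\textrm{DRS}}(Z^k)\|_F^2 = \mathcal{O}(1)$. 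Combining this with the monotonicity $\|Z^{k+1} - T_{\textrm{DRS}}(Z^{k+1})\|_F \leq \|Z^k - T_{\textrm{DRS}}(Z^k)\|_F + 2\|\epsilon^k\|_F$ (from nonexpansiveness of $T_{\textrm{DRS}}$ plus the inexact step) yields the advertised $\mathcal{O}(1/\sqrt{k})$ rate.

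The main obstacle I expect is the careful handling of this perturbed telescoping argument: ensuring that the cross-term $\|\epsilon^k\|_F\|Z^{k+1}-Z^\star\|_F$ and the loss of monotonicity of $\|Z^{k+1}-T_{\textrm{DRS}}(Z^{k+1})\|_F$ do not inflate the rate. This is precisely why I take $d_k = \Omega(\log k)$ (rather than just ``large enough''): it makes $\|\epsilon^k\|_F$ summable so that the perturbation analysis of \cite{davis2016convergence} applies verbatim, and the $1/\min\{l,1\}$ factor in the degree simply inherits from Lemma \ref{lem:cheb-est} exactly as in the PFPG proof.
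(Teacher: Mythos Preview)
Your approach matches the paper's: bound $E^k := Z^{k+1}-T_{\textrm{DRS}}(Z^k)$ through the Chebyshev/subspace estimate exactly as in Lemma~\ref{lem:ctr1}, and then run the inexact DRS analysis of \cite{davis2016convergence} to get $\sum_k\|Z^k-T_{\textrm{DRS}}(Z^k)\|_F^2<\infty$ together with an approximate monotonicity of $\|Z^k-T_{\textrm{DRS}}(Z^k)\|_F$, from which the $\cO(1/\sqrt{k})$ rate follows via the half-interval trick. One slip to fix: the bound $\|e^k\|_F\le C''/k$ is \emph{not} summable (harmonic series), so your claim $\sum_{k\ge1}\|e^k\|_F<\infty$ fails as written; you must take the implicit constant in $d_k=\Omega(\log k/\min\{l,1\})$ large enough to force $\|e^k\|_F=\cO(1/k^s)$ for some $s>1$, which is exactly what the paper does before invoking the telescoping argument.
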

\begin{proof}
  According to the proof of Lemma \ref{lem:ctr1}, we obtain
\beaa
   && \|\cP_{U^k}(Z^k) - \cP_{V_{\cI_k}^{k}}(Z^k)\|_F 
   \leq \frac{c\cdot 2^{q_k}}{2^{q_kd_k\min\{l,1\}}},
\eeaa
where $c$ is a constant.
Define the error between one PFAM update and one DRS iteration $E^k = Z^{k+1}-T_{\textrm{DRS}}(Z^k)$.
Since $\prox_{tG}$ and $\cP_+$ are Lipschitz 
continuous with constant $L_1$ and $L_2$, we have
\[
    \|E^k\|_F \leq (2L_1L_2 + L_2)\|\cP_{U^k}(Z^k) - \cP_{V_{\cI_k}^{k}}(Z^k)\|_F \leq  \frac{c_2\cdot 2^{q_k}}{2^{q_kd_k\min\{l,1\}}},
\]
where $c_2$ is a constant. Let $s>1$ be an arbitrary number, the error can be controlled with an order
\[
\|E^k\|_F = \cO(\frac{1}{k^s}),
\]
by choosing $d_k = \Omega(\frac{s\log l}{\min(l, 1)})$.

Define $W^k =T_{\textrm{DRS}}(Z^k) - Z^k$, $S^k = Z^{k+1}-Z^k$ 
and $T^k = T_{\textrm{DRS}}(Z^{k+1}) - T_{\textrm{DRS}}(Z^k)$.
Then we have
\begin{equation} \label{eqn:Wk1}
\begin{split}
\|W^{k+1}\|_F^2 &= \|W^k\|_F^2 + \|W^{k+1} - W^{k}\|_F^2 + 2\iprod{W^k}{W^{k+1}-W^k}\\
&= \|W^k\|_F^2 + \|T^{k}-S^k\|_F^2 + 2\iprod{S^k-E^k}{T^{k}-S^k}.
\end{split}
\end{equation}
By the firm nonexpansiveness of $T_{\text{DRS}}$, we obtain
\begin{equation} \label{eqn:Wk2}
 2\iprod{S^k}{T^{k}-S^k} = \|T^k\|^2_F -\|S^k\|_F^2 - \|T^k-S^k\|_F^2
 \leq -2\|T^k-S^k\|_F^2.
\end{equation}
Plugging \eqref{eqn:Wk2} into \eqref{eqn:Wk1} yields
\be\label{eq:bound1}
\|W^{k+1}\|_F^2 \leq \|W^k\|_F^2 - \|T^k-S^k\|_F^2 - 2\iprod{E^k}{T^{k}-S^k}
\leq \|W^k\|_F^2 + \|E^k\|_F^2.
\ee
Since $\sum_{k=0}^{\infty}\|E^k\|_F^2 < \infty$, it implies the boundness of
$\|W^k\|_F^2$, i.e., 
\[
    \|W^k\|_F^2 \leq \|W^0\|_F^2 + \sum_{i=0}^{k-1}\|E^k\|_F^2 < \infty.
\]
Let $Z^*$ be a fixed point of $T_{\textrm{DRS}}$, then we obtain that
\begin{equation} \label{eqn:TDRS1}
    \|T_{\textrm{DRS}}(Z^k) - Z^*\|_F \leq \|Z^k-Z^*\|_F 
    \leq \|T_{\textrm{DRS}}(Z^{k-1})-Z^*\|_F + \|E^{k-1}\|_F.
\end{equation}
Applying \eqref{eqn:TDRS1} recursively gives
\[
    \|T_{\textrm{DRS}}(Z^k) - Z^*\|_F \leq \|T_{\textrm{DRS}}(Z^{0})-Z^*\|_F +\sum_{i=0}^{k-1}\|E^k\|_F < \infty.
\]
Hence $\|T_{\textrm{DRS}}(Z^k) - Z^*\|_F$ is bounded.
Due to the firm nonexpansiveness of $T_{\text{DRS}}$, we have that
\beaa
\|Z^{k+1} - Z^*\|_F^2 &=& \|T_{\textrm{DRS}}(Z^k) - Z^*\|_F^2 + \|E^k\|_F^2 
+ 2\iprod{T_{\textrm{DRS}}(Z^k) - Z^*}{E^k} \\
&\leq& \|Z^k - Z^*\|_F^2 - \|W^k\|_F^2 + 2\|T_{\textrm{DRS}}(Z^k) - Z^*\|_F\|E^k\|_F.
\eeaa
Therefore, it holds 
\be\label{eq:bound2}
\sum_{i=0}^{\infty}\|W^i\|_F^2 \leq \|Z^0-Z^*\|_F^2 + 2 \sum_{i=0}^{\infty}\|T_{\textrm{DRS}}(Z^i) - Z^*\|_F\|E^i\|_F \leq \infty.
\ee
The upper bound \eqref{eq:bound2} implies $\|W^k\|\rightarrow 0$. In fact, we can also
show the convergence speed.
According to \eqref{eq:bound1} and \eqref{eq:bound2}, we have
\beaa
\left\lceil\frac{k}{2}\right\rceil\|W^k\|_F^2
&=& \sum_{i = \left\lceil\frac{k}{2}\right\rceil }^k\|W^k\|_F^2 
\leq \sum_{i = \left\lceil\frac{k}{2}\right\rceil }^k 
(\|W^i\|_F^2 + \sum_{j=i}^k\|E^j\|_F^2) \\
&=& \sum_{i = \left\lceil\frac{k}{2}\right\rceil }^k \|W^i\|_F^2 
+ \sum_{i = \left\lceil\frac{k}{2}\right\rceil }^k (k-i+1)\|E^i\|_F^2 \\
&\leq& \sum_{i = \left\lceil\frac{k}{2}\right\rceil }^k \|W^i\|_F^2 
+ c\sum_{i = \left\lceil\frac{k}{2}\right\rceil }^k \|E^i\|_F \longrightarrow 0,
\eeaa
where the last inequality is due to the fact that $\|E^k\|_F$ dominates $\|E^k\|_F^2$ since $\|E^k\|_F = \cO(1/k^s)$. This completes the proof.
\end{proof}

\section{Implementation Details} \label{sec:impls}

\subsection{Choice of the Interval $[a, b]$}
We mainly focus on how to
extract a good subspace that contains the eigenvectors corresponding to 1) all positive eigenvalues
and 2) $r$ largest eigenvalues, by choosing proper $[a, b]$.
In either case, a good choice of $a$ is the smallest eigenvalue of
the input matrix $A$, say $\lambda_n$.
For example, one can use the Lanczos method (\texttt{eigs} in \textsc{matlab})
to estimate $\lambda_n$. Note that $A$ is  changed during the iterations, thus
$a$ must be kept up to date periodically.

The estimation of $b$ is related to the case we are dealing with. If all positive eigenvalues are computed, $b$ is set to $b = \eta a$ where $\eta$ is a
small positive number, usually 0.1 to 0.2.
If $r$ largest eigenvalues are needed only, 
then we choose $b=(1 - \eta)\hat{\lambda}_r + \eta a$, where $\hat{\lambda}_r$ is one estimation
of $\lambda_r$. A natural choice is the $\lambda_r$ in the previous iteration.

\subsection{Choice of the Subspace Dimension}
We next discuss about the choice of $p$, the number of columns of $U^k$. Suppose $r$ largest
eigenvalues of a matrix is required, we can add some guard vectors to $U^k$ and set $p=5+q\cdot r$ where $q>1$
is a parameter. If we want to compute all positive eigen-pairs, the number of positive eigenvalues
in the previous iteration is used as an estimation since the real dimension is unknown. The guard vectors are also added in order to preserve
the convergence in case the dimension is underestimated.

\subsection{Acceleration Techniques}
Gradient descent algorithms usually have slow convergence. This
issue can be resolved using acceleration techniques such as
Anderson Acceleration (AA) \cite{anderson1965iterative,walker2011anderson}.
In our implementation we apply an extrapolation-based acceleration
techniques proposed in \cite{scieur2016regularized} to overcome the
instability of the Anderson Acceleration. To be precise,
we perform linear combinations of the points $x^k$ every
$l+2$ iterations to obtain a better estimation $\tilde{x} = \sum_{i=0}^l \tilde{c}_i x^{k-l+i}$.
Define the difference of $l+2$ iteration points
\[
U=[x^{k-l+1} - x^{k-l}, \ldots, x^{k+1} - x^k],
\]
the coefficients $\tilde{c} = (\tilde{c}_0,\ldots,\tilde{c}_l)^T$ is the solution
of the following problem
\begin{equation}
	\tilde{c} = \argmin_{\mathbf{1}^T c = 1} c^T(U^TU + \lambda I)c,
\end{equation}
where $\lambda > 0$ is a regularization parameter.

\section{Numerical Experiments} \label{sec:num}
This section reports on a set of applications and their numerical
results based on the spectral optimization problem \eqref{eqn:mat-opt}.
All experiments are performed on a Linux server with two 
twelve-core Intel Xeon E5-2680 v3 processors at 2.5 GHz
and a total amount of 128 GB shared memory.
All reported time is wall-clock time in seconds.
\subsection{Nearest Correlation Estimation}
Given a matrix $G \in \mathcal{S}^n$, the (unweighted) nearest correlation estimation
problem (NCE) is to solve the semi-definite optimization problem:
\begin{equation}\label{eqn:NCE-primal}
\begin{array}{rl}
\min & \frac{1}{2}\|G-X\|_F^2, \\
\mathrm{s.t.} & \mathrm{diag}(X) = \mathbf{1}_n, \\
& X \succeq 0.
\end{array}
\end{equation} 
The dual problem of \eqref{eqn:NCE-primal} is
\begin{equation} \label{eqn:NCE-dual}
\min  \frac{1}{2}\|\Pi_{\mathcal{S}^n_+}(G+\mathrm{Diag}(x))\|_F^2 -\mathbf{1}_n^Tx,
\end{equation}
where $\Pi_{\mathcal{S}_{+}^n}(\cdot)$ is the projection operator onto the positive semi-definite
cone. Note that \eqref{eqn:NCE-dual} can be rewritten as a
spectral function minimization problem as \eqref{eqn:mat-opt},
whose objective function and gradient are:
\begin{eqnarray} 
	\min F(x)&:=&f\circ \lambda(G+\mathrm{Diag}(x)) - \mathbf{1}_n^Tx, \label{eqn:NCE-dual-equiv} \\
	\nabla F(x) &=& \mathrm{diag}(\Pi_{\mathcal{S}^n_+}(G + \mathrm{Diag}(x))) - \mathbf{1}_n,\label{eqn:NCE-gradient}
\end{eqnarray}
where $f(\lambda)=\frac{1}{2}\sum_{i=1}^n\max(\lambda_i, 0)^2$.
Expression \eqref{eqn:NCE-dual-equiv} and \eqref{eqn:NCE-gradient} imply
only positive eigenvalues and eigenvectors of $G+\mathrm{Diag}(x^k)$ are needed
at each iteration. Hence Assumption \ref{assump:f} is satisfied if
$\Pi_{\mathcal{S}^n_+}(G + \mathrm{Diag}(x))$ is low-rank for all $x$
in the neighborhood of $x^*$. 

First we generate synthetic NCE problem data based on the
first three examples in \cite{doi:10.1137/050624509}.
\begin{example} \label{eg:5.5}
	$C \in \cS^{n}$ is randomly generated correlation matrix
	using \texttt{gallery('randcorr', n)} in \textsc{matlab}.
	$R$ is an $n$ by $n$ random matrix whose entries $R_{ij}$
	are sampled from the uniform distribution in $[-1, 1]$. Then the
	matrix $G$ is set to $G = C + R$.
\end{example}
\begin{example} \label{eg:5.6}
	$G \in \cS^n$ is a randomly generated  matrix with $G_{ij}$
	satisfying the uniform distribution in $[-1, 1]$ if $i\neq j$.
	The diagonal elements $G_{ii}$ is set to 1 for all $i$.
\end{example}
\begin{example} \label{eg:5.7}
	$G \in \cS^n$ is a randomly generated  matrix with $G_{ij}$
	satisfying the uniform distribution in $[0, 2]$ if $i\neq j$.
	The diagonal elements $G_{ii}$ is set to 1 for all $i$.
\end{example}
In all three examples, the dimension $n$ is set to
various numbers from 500 to 4000. The initial point $x^0$ is set
to $\mathbf{1}_n - \mathrm{diag}(G)$. Three methods are compared in
NCE problems, namely, the gradient method with a fixed step size (Grad)
and our proposed method (PFPG), and
the semi-smooth Newton method (Newton) proposed in \cite{doi:10.1137/050624509}.
The stopping criteria is $\|\nabla F(x)\|\leq 10^{-7}\|\nabla F(x_0)\|$.
The reason not to use a higher accuracy is that first-order methods (such as Grad)
are not designed for obtaining high accuracy solutions.
We also record the number of iterations (denoted by ``iter''), the
objective function value $f$ and the relative norm of the gradient
$\|g\| = \|\nabla F(x)\|/\|\nabla F(x^0)\|$. The results are shown in Table \ref{tab:ncm}.

%
%
%
%
%
\begin{table}[h]
	\centering
	\setlength{\tabcolsep}{3pt}
	\caption{Results of NCE problems} \label{tab:ncm}
	\begin{tabular}{|c|cccc|cccc|cccc|} \hline
		\multicolumn{13}{|c|}{Results of Example \ref{eg:5.5}} \\ \hline
		\multirow{2}{*}{$n$} & \multicolumn{4}{c|}{Grad} &\multicolumn{4}{c|}{PFPG} & \multicolumn{4}{c|}{Newton} \\ \cline{2-13} 
		& time & iter & $f$ & $\|g\|$ & time & iter & $f$ & $\|g\|$ & time & iter & $f$ & $\|g\|$ \\ \hline 
		500 &   0.8 &   28 & 1.1e+05 & 7.9e-08 &   0.7 &   40 & 1.1e+05 & 4.1e-08 &   0.6 &    8 & 1.1e+05 & 1.7e-08 \\ 
		1000 &   2.6 &   28 & 3.2e+05 & 5.5e-08 &   1.1 &   46 & 3.2e+05 & 2.6e-08 &   1.4 &    8 & 3.2e+05 & 1.2e-08 \\ 
		1500 &   6.2 &   28 & 5.9e+05 & 7.0e-08 &   2.5 &   46 & 5.9e+05 & 8.2e-08 &   3.3 &    8 & 5.9e+05 & 1.0e-08 \\ 
		2000 &  13.5 &   31 & 9.2e+05 & 8.0e-08 &   4.2 &   46 & 9.2e+05 & 4.9e-08 &   5.9 &    8 & 9.2e+05 & 8.8e-09 \\ 
		2500 &  24.9 &   34 & 1.3e+06 & 1.7e-08 &   7.2 &   47 & 1.3e+06 & 6.9e-08 &   9.6 &    8 & 1.3e+06 & 7.9e-09 \\ 
		3000 &  43.2 &   34 & 1.7e+06 & 2.8e-08 &  13.2 &   52 & 1.7e+06 & 1.8e-08 &  15.7 &    8 & 1.7e+06 & 7.2e-09 \\ 
		4000 & 110.6 &   34 & 2.7e+06 & 1.4e-08 &  25.0 &   52 & 2.7e+06 & 2.4e-08 &  36.1 &    8 & 2.7e+06 & 6.3e-09 \\ 
		\hline \hline 
		
		\multicolumn{13}{|c|}{Results of Example \ref{eg:5.6}} \\ \hline
        \multirow{2}{*}{$n$} & \multicolumn{4}{c|}{Grad} &\multicolumn{4}{c|}{PFPG} & \multicolumn{4}{c|}{Newton} \\ \cline{2-13} 
        & time & iter & $f$ & $\|g\|$ & time & iter & $f$ & $\|g\|$ & time & iter & $f$ & $\|g\|$ \\ \hline 
        500 &   0.5 &   16 & 8.9e+03 & 2.2e-09 &   0.6 &   16 & 8.9e+03 & 1.0e-08 &   0.5 &    5 & 8.9e+03 & 1.6e-07 \\ 
        1000 &   1.6 &   16 & 2.6e+04 & 7.4e-08 &   1.0 &   17 & 2.6e+04 & 8.3e-08 &   1.2 &    6 & 2.6e+04 & 1.2e-07 \\ 
        1500 &   4.0 &   17 & 5.0e+04 & 6.6e-08 &   2.2 &   20 & 5.0e+04 & 9.1e-08 &   2.5 &    6 & 5.0e+04 & 9.6e-08 \\ 
        2000 &   7.5 &   17 & 7.8e+04 & 8.9e-08 &   4.8 &   22 & 7.8e+04 & 2.4e-08 &   4.7 &    6 & 7.8e+04 & 8.3e-08 \\ 
        2500 &  13.8 &   19 & 1.1e+05 & 7.2e-08 &   7.4 &   22 & 1.1e+05 & 4.5e-08 &   7.6 &    6 & 1.1e+05 & 7.5e-08 \\ 
        3000 &  25.9 &   21 & 1.5e+05 & 8.9e-08 &  12.0 &   22 & 1.5e+05 & 8.1e-08 &  12.3 &    6 & 1.5e+05 & 6.9e-08 \\ 
        4000 &  72.5 &   22 & 2.3e+05 & 3.4e-09 &  27.2 &   25 & 2.3e+05 & 9.9e-08 &  28.6 &    6 & 2.3e+05 & 6.0e-08 \\ 
        \hline \hline 
        
		\multicolumn{13}{|c|}{Results of Example \ref{eg:5.7}} \\ \hline
        \multirow{2}{*}{$n$} & \multicolumn{4}{c|}{Grad} &\multicolumn{4}{c|}{PFPG} & \multicolumn{4}{c|}{Newton} \\ \cline{2-13} 
        & time & iter & $f$ & $\|g\|$ & time & iter & $f$ & $\|g\|$ & time & iter & $f$ & $\|g\|$ \\ \hline 
        500 &   0.9 &   33 & 1.3e+05 & 7.4e-08 &   0.7 &   43 & 1.3e+05 & 1.3e-08 &   0.6 &    8 & 1.3e+05 & 1.8e-07 \\ 
        1000 &   3.8 &   43 & 5.0e+05 & 2.0e-08 &   1.1 &   54 & 5.0e+05 & 3.0e-08 &   1.6 &    9 & 5.0e+05 & 1.3e-07 \\ 
        1500 &  11.3 &   54 & 1.1e+06 & 2.6e-08 &   2.4 &   65 & 1.1e+06 & 7.1e-08 &   4.0 &   10 & 1.1e+06 & 1.0e-07 \\ 
        2000 &  22.6 &   54 & 2.0e+06 & 4.3e-08 &   5.0 &   76 & 2.0e+06 & 8.3e-08 &   7.2 &   10 & 2.0e+06 & 9.0e-08 \\ 
        2500 &  60.9 &   87 & 3.1e+06 & 3.6e-08 &  10.6 &  120 & 3.1e+06 & 4.2e-08 &  12.1 &   10 & 3.1e+06 & 8.0e-08 \\ 
        3000 & 104.0 &   87 & 4.5e+06 & 6.2e-08 &  16.1 &  129 & 4.5e+06 & 7.8e-08 &  19.3 &   10 & 4.5e+06 & 7.4e-08 \\ 
        4000 & 278.0 &   91 & 8.0e+06 & 7.9e-08 &  32.8 &  142 & 8.0e+06 & 7.3e-08 &  44.2 &   10 & 8.0e+06 & 6.4e-08 \\ 
        \hline

	\end{tabular}
\end{table}
The following observations can be drawn from Table \ref{tab:ncm}:
\begin{itemize}
	\item All three methods reached the required accuracy in all test instances.
	For large problems such as $n=4000$, PFPG usually provides at least
	three times speedup over Grad. For some large problems such as $n=4000$ in Example \ref{eg:5.7},
	PFPG has nearly 9 times speedup.
	\item The Newton method usually converges within 10 iterations, since it enjoys
	the quadratic convergence property. The main cost of the Newton method are
	full EVDs and solving linear systems, which make each
	iteration slow. However, PFPG benefits from the subspace extraction
	so that the cost of each iteration is greatly reduced, making it very competitive
	against the Newton method.
	\item PFPG usually needs more iterations than Grad, which is resulted
	from the inexactness of the Chebyshev filters. The error
	can be controlled by their degree. Using higher order polynomials
	will reduce the number of iterations, but increase the cost per iteration.
	Moreover, the number of iterations does not increase much thanks to the
	warm start property of the filters.
\end{itemize}

It should be pointed out that the benefit brought by PFPG is highly dependent
on the number of positive eigenvalues $r_+$ at $G + \mathrm{Diag}(x^*)$. We have checked
this number and find out that for Example \ref{eg:5.5} and \ref{eg:5.6}, $r_+$
is approximately $0.1n$ , and $r_+ \approx 0.05n$ for Example \ref{eg:5.7}. That is 
the reason why PFPG is able to provide higher speedups in Example \ref{eg:5.7}.
As a conclusion we address that PFPG is not suitable for $r_+ \approx 0.5n$.

Next we test the three methods on real datasets. The test matrices are selected from
the invalid correlation matrix repository\footnote{Available at \url{https://github.com/higham/matrices-correlation-invalid}}.
Since we are more interested in large NCE problems, we choose three largest matrices
from the collection, whose details are presented in Table \ref{tab:invalid-matrix}. The label
``opt. rank'' denotes the rank of the optimal point $X^*$ in \eqref{eqn:NCE-primal}.
Note that for matrix ``cor3120'', the solution is not low-rank. However, the projection
can be performed with respect to the negative definite part of the variable, which is
low-rank in this case.

\begin{table}[h]
	\centering
	\caption{Invalid correlation matrices from real applications} \label{tab:invalid-matrix}
	\begin{tabular}{|c|c|c|l|}
		\hline
		name & size & opt. rank & description \\
		\hline
		cor1399 & $1399\times 1399$ & 154 & matrix constructed from stock data \\ \hline
		cor3120 & $3120\times 3120$ & 3025 & matrix constructed from stock data \\ \hline
		bccd16  & $3250\times 3250$ & 5 & matrix from EU bank data \\ \hline
	\end{tabular}
\end{table}

Again, we compare the performance of Grad, PFPG, and Newton on the three matrices.
The stopping criteria is set to $\|\nabla F(x)\|\leq 10^{-5}\|\nabla F(x_0)\|$.
The results are shown in Table \ref{tab:ncm-real}.
\begin{table}[h]
	\centering
	\setlength{\tabcolsep}{3pt}
	\caption{Results of NCE problems on real data} \label{tab:ncm-real}
	\begin{tabular}{|r|cccc|cccc|cccc|}
		\hline
		\multirow{2}{*}{matrix} & \multicolumn{4}{c|}{Grad} &\multicolumn{4}{c|}{PFPG} & \multicolumn{4}{c|}{Newton} \\ \cline{2-13} 
		& time & iter & $f$ & $\|g\|$ & time & iter & $f$ & $\|g\|$ & time & iter & $f$ & $\|g\|$ \\ \hline 
		cor1399 &   6.0 &   32 & 6.5e+04 & 1.3e-06 &   2.6 &   32 & 6.5e+04 & 4.1e-06 &   1.9 &    5 & 6.5e+04 & 4.4e-06 \\ \hline
		cor3120 &  76.0 &   43 & 9.3e+04 & 2.4e-06 &  12.3 &   43 & 9.3e+04 & 4.8e-06 &   8.9 &    4 & 9.3e+04 & 3.8e-06 \\ \hline
		bccd16  &  18.9 &   16 & 1.4e+06 & 3.3e-09 &   4.2 &   16 & 1.4e+06 & 6.2e-07 &   5.0 &    2 & 1.4e+06 & 3.4e-07 \\ \hline
	\end{tabular}
\end{table}
For real data, PFPG is still able to perform multi-fold speedups over the traditional gradient method.
Though not always faster than Newton, PFPG is very competitive against the second-order method.

\subsection{Nuclear Norm Minimization}
We next consider two algorithms that deal with the nuclear norm minimization problem (NMM): \textsc{svt} and \textsc{nnls}.

\subsubsection{The SVT Algorithm}
Consider a regularized NMM problem with the form
\begin{equation} \label{eqn:ANM-primal}
	\begin{array}{rl}
	\min & \|X\|_* + \frac{1}{2\mu}\|X\|_F^2, \\
	\st  & \cA(X) = b. \\
	\end{array}
\end{equation}
The dual problem of \eqref{eqn:ANM-primal} is
\begin{equation} \label{eqn:ANM-dual}
	\min_{y \in \bR^m} \frac{1}{2}\|\mathrm{shrink}_\mu(\cA^*(y))\|_F^2 - b^Ty,
\end{equation}
where $\mathrm{shrink}_\mu(X)$ is the shrinkage operator defined on
the matrix space $\bR^{n\times n}$, which actually performs soft-thresholding
on the singular values of $X$, i.e.
\[
\mathrm{shrink}_\mu(X) = U\mathrm{Diag}((\sigma_1 - \mu)_+, \cdots, (\sigma_n - \mu)_+)V^T.
\]
The formulation \eqref{eqn:ANM-dual} is a special case of our
model \eqref{eqn:mat-opt}, with $f$ being the square of a vector shrinkage operator.
Evaluating $f$ and $\nabla f$ only involves computing singular values
which are larger than $\mu$. Thus, problem \eqref{eqn:ANM-dual} satisfies Assumption \ref{assump:f}.

We consider the \textsc{svt} algorithm \cite{svt} which has the iteration scheme
\begin{equation} \label{eqn:svt}
	\left\{\begin{array}{l}
	W^k = \mathrm{shrink}_\mu(\cA^*(x^{k-1})), \\
	x^{k} = x^{k-1} + \tau_k(b - \mathcal{A}(W^k)).\\
	\end{array}
	\right.
\end{equation}
It can be shown that the \textsc{svt} iteration \eqref{eqn:svt} is equivalent to
the gradient descent method on the dual problem \eqref{eqn:ANM-dual}, with
step size $\tau_k$. Thus a polynomial-filtered \textsc{svt} (PFSVT) can be proposed.
A small difference is that we need
to extract a subspace that contains the required singular vectors of a non-symmetric matrix
$A$ for \textsc{svt}. This operation can be performed by the subspace extraction from
$A^TA$ and $AA^T$.

The matrix completion problem is used to test PFSVT, with the constraint
$\cP_{\Omega}(X) = \cP_{\Omega}(M)$.
The test data are generated as follows.
\begin{example} \label{eg:MC}
	The rank $r$ test matrix $M\in\bR^{m\times n}$ is generated randomly 
	using the following procedure: first two random standard Gaussian matrices $M_L\in\bR^{m\times r}$ 
	and $M_R\in\bR^{n\times r}$ are 
	generated and then $M$ is assembled by $M=M_LM_R^T$; the projection set
	$\Omega$ with $p$ index pairs is sampled from $m\times n$ possible pairs uniformly.  We use ``SR'' (sampling ratio) to denote the ratio $p/(mn)$.
\end{example}

We have tested the \textsc{svt} algorithm under three settings: \textsc{svt} with standard Lanczos
SVD (SVT-LANSVD), \textsc{svt} with SVD based on the Gauss-Newton method (SVT-SLRP, proposed in \cite{liu2015efficient}),
and polynomial-filtered \textsc{svt} (PFSVT), where the degree of the polynomial
filter is set to $1$ in all cases. Note that in \eqref{eqn:svt} the variable $W^k$
is always stored in the sparse format. We invoke the \texttt{mkl\_dcsrmm} subroutine in the
Intel Math Kernel Library (MKL) to perform sparse matrix multiplications (SpMM).
The results are displayed in Table \ref{tab:SVT-MKL}, where ``iter'' denotes the outer
iteration of the \textsc{svt} algorithm, ``svp'' stands for the recovered matrix rank
by the algorithm, and ``mse''( = $\|X - M\|_F / \|M\|_F$) denotes the relative mean squared error between the
recovered matrix $X$ and the ground truth $M$.

\begin{table}[h]
	\centering
	\setlength{\tabcolsep}{1pt}
	\caption{Results of \textsc{svt} on randomly generated matrix completion problems by
		using SpMM-MKL} 
	\label{tab:SVT-MKL}
	{
		\begin{tabular}{|ccc|cccc|cccc|cccc|} \hline  
			\multicolumn{3}{|c|}{}     & \multicolumn{4}{c|}{SVT-LANSVD} &
			\multicolumn{4}{c|}{SVT-SLRP} & \multicolumn{4}{c|}{PFSVT} \\ \hline
			m=n & SR (\%)& r  & iter &  svp & time  &  mse & iter &  svp & time  &  mse  & iter &  svp & time  &  mse \\ \hline
			1000  & 20.0 &   10  & 79 & 10 & 4.23 & 1.31e-04  & 79 & 10 & 1.94 & 1.31e-04  & 78 & 10 & 1.83 & 1.40e-04 \\ \hline
			1000  & 30.0 &   10  & 62 & 10 & 3.35 & 1.17e-04  & 62 & 10 & 1.71 & 1.17e-04  & 62 & 10 & 1.66 & 1.15e-04 \\ \hline
			1000  & 40.0 &   10  & 53 & 10 & 2.94 & 1.15e-04  & 53 & 10 & 1.52 & 1.14e-04  & 53 & 10 & 1.79 & 1.13e-04 \\ \hline
			1000  & 30.0 &   50  & 169 & 69 & 43.58 & 2.12e-04  & 171 & 68 & 8.67 & 2.12e-04  & 168 & 67 & 7.31 & 2.10e-04 \\ \hline
			1000  & 40.0 &   50  & 110 & 50 & 13.77 & 1.60e-04  & 110 & 50 & 5.35 & 1.60e-04  & 110 & 50 & 5.28 & 1.58e-04 \\ \hline
			1000  & 50.0 &   50  & 86 & 50 & 13.01 & 1.40e-04  & 86 & 50 & 4.73 & 1.40e-04  & 86 & 50 & 4.38 & 1.39e-04 \\ \hline
			5000  & 10.0 &   10  & 53 & 10 & 28.88 & 1.08e-04  & 53 & 10 & 12.37 & 1.08e-04  & 52 & 10 & 12.47 & 1.22e-04 \\ \hline
			5000  & 20.0 &   10  & 42 & 10 & 30.43 & 1.04e-04  & 42 & 10 & 12.85 & 1.04e-04  & 42 & 10 & 13.39 & 1.05e-04 \\ \hline
			5000  & 30.0 &   10  & 38 & 10 & 34.16 & 9.23e-05  & 38 & 10 & 16.62 & 9.23e-05  & 38 & 10 & 16.99 & 9.55e-05 \\ \hline
			5000  & 10.0 &   50  & 107 & 50 & 107.38 & 1.59e-04  & 107 & 50 & 64.38 & 1.59e-04  & 107 & 50 & 62.10 & 1.60e-04 \\ \hline
			5000  & 20.0 &   50  & 67 & 50 & 90.14 & 1.23e-04  & 67 & 50 & 33.11 & 1.22e-04  & 67 & 50 & 27.83 & 1.25e-04 \\ \hline
			5000  & 30.0 &   50  & 54 & 50 & 92.55 & 1.21e-04  & 54 & 50 & 37.90 & 1.21e-04  & 54 & 50 & 32.87 & 1.21e-04 \\ \hline
			10000  & 5.0 &   10  & 54 & 10 & 59.63 & 1.10e-04  & 54 & 10 & 27.41 & 1.10e-04  & 53 & 10 & 27.12 & 1.23e-04 \\ \hline
			10000  & 8.0 &   10  & 46 & 10 & 82.54 & 1.08e-04  & 46 & 10 & 34.29 & 1.08e-04  & 46 & 10 & 35.20 & 1.08e-04 \\ \hline
			10000  & 10.0 &   10  & 43 & 10 & 84.65 & 1.07e-04  & 43 & 10 & 39.26 & 1.07e-04  & 43 & 10 & 39.52 & 1.08e-04 \\ \hline
			10000  & 5.0 &   50  & 109 & 50 & 271.86 & 1.66e-04  & 109 & 50 & 150.35 & 1.65e-04  & 109 & 50 & 131.95 & 1.67e-04 \\ \hline
			10000  & 10.0 &   50  & 69 & 50 & 260.60 & 1.29e-04  & 69 & 50 & 173.75 & 1.29e-04  & 69 & 50 & 152.81 & 1.30e-04 \\ \hline
			10000  & 15.0 &   50  & 57 & 50 & 291.30 & 1.16e-04  & 57 & 50 & 206.06 & 1.16e-04  & 57 & 50 & 178.84 & 1.22e-04 \\ \hline 
			
		\end{tabular}
	}
\end{table}
The following observations can be drawn from Table \ref{tab:SVT-MKL}.
\begin{itemize}
	\item All three solvers produce similar results: ``iter'', ``svp'', and ``mse''
	are almost the same.
	\item SVT-SLRP and PFSVT have similar speed, both of which enjoy 1 to 5 times
	speedup over SVT-LANSVD.  In all test cases SVT-SLRP is a little bit slower than
	PFSVT, yet still comparable.
\end{itemize}

\subsubsection{The NNLS Algorithm}
Consider a penalized formulation of NMM:
\begin{equation} \label{eqn:NMM-penalized}
	\min \|X\|_* + \frac{1}{2\mu}\|\cA(X) - b\|_F^2.
\end{equation}
We turn to the \textsc{nnls} algorithm \cite{toh2010accelerated} to solve \eqref{eqn:NMM-penalized},
which is essentially an accelerated proximal gradient method. At the $k$-th iteration,
 the main cost is to compute the truncated SVD of a matrix
\[
A^k = \beta_1 U^k(V^k)^T - \beta_2U^{k-1}(V^{k-1})^T - \beta_3 G^k,
\]
where $U^k,V^k,U^{k-1},V^{k-1}$ are dense matrices, but the matrix $G^k$
is either dense or sparse, depending on the sample ratio (SR). Though
the formulation \eqref{eqn:NMM-penalized} is not a special case of \eqref{eqn:mat-opt},
we can still insert the polynomial filter \eqref{eqn:subspace-update} into 
the \textsc{nnls} algorithm to reduce the cost of SVD, resulting in
a polynomial-filtered \textsc{nnls} algorithm (PFNNLS).
The Example \ref{eg:MC} is still used to compare the performance of
NNLS-LANSVD, NNLS-SLRP, and PFNNLS. The polynomial filter degree is merely set to 1.
The results are shown in Table \ref{tab:NNLS-dense}
for dense $G^k$ and Table \ref{tab:NNLS-sparse-MKL} for sparse $G^k$. For matrix multiplications, 
we call \textsc{matlab} built-in functions for dense $G^k$ and Intel MKL subroutines for sparse $G^k$.
The notations have the same meaning as in Table \ref{tab:SVT-MKL}.

\begin{table}[h]
	\centering
	\setlength{\tabcolsep}{2pt}
	\caption{Results of \textsc{nnls} on dense examples} 
	\label{tab:NNLS-dense}
	{
		\begin{tabular}{|ccc|cccc|cccc|cccc|} \hline  
			\multicolumn{3}{|c|}{}     & \multicolumn{4}{c|}{NNLS-LANSVD} &
			\multicolumn{4}{c|}{NNLS-SLRP} & \multicolumn{4}{c|}{PFNNLS} \\ \hline
			m=n & SR (\%)& r  & iter &  svp & time  &  mse & iter &  svp & time  &  mse  & iter &  svp & time  &  mse \\ \hline
			1000 & 25.0 & 50  & 55 & 50 & 5.1 & 7.60e-04 & 55 & 50 & 2.7 & 1.18e-03 & 54 & 50 & 2.2 & 1.19e-03\\ \hline 
			1000 & 35.0 & 50  & 42 & 50 & 3.1 & 5.42e-04 & 49 & 50 & 2.6 & 4.40e-04 & 49 & 50 & 1.9 & 4.66e-04\\ \hline 
			1000 & 49.9 & 50  & 39 & 50 & 3.2 & 1.81e-04 & 39 & 50 & 1.9 & 1.81e-04 & 39 & 50 & 1.6 & 1.84e-04\\ \hline 
			1000 & 25.0 & 100  & 100 & 150 & 29.4 & 3.55e-01 & 100 & 150 & 8.6 & 3.87e-01 & 100 & 150 & 7.1 & 5.03e-01\\ \hline 
			1000 & 35.0 & 100  & 64 & 100 & 10.7 & 1.45e-03 & 64 & 100 & 4.2 & 1.48e-03 & 65 & 100 & 3.3 & 9.32e-04\\ \hline 
			1000 & 49.9 & 100  & 54 & 100 & 8.3 & 4.70e-04 & 48 & 100 & 3.8 & 1.05e-03 & 51 & 100 & 3.2 & 4.33e-04\\ \hline 
			5000 & 25.0 & 50  & 36 & 50 & 40.0 & 1.41e-04 & 36 & 50 & 17.8 & 1.42e-04 & 36 & 50 & 16.1 & 1.46e-04\\ \hline 
			5000 & 35.0 & 50  & 34 & 50 & 40.0 & 1.60e-04 & 34 & 50 & 18.7 & 1.61e-04 & 34 & 50 & 17.7 & 1.61e-04\\ \hline 
			5000 & 50.0 & 50  & 32 & 50 & 41.7 & 1.46e-04 & 32 & 50 & 21.5 & 1.48e-04 & 32 & 50 & 20.6 & 1.45e-04\\ \hline 
			5000 & 25.0 & 100  & 49 & 100 & 106.4 & 2.83e-04 & 49 & 100 & 35.6 & 2.88e-04 & 50 & 100 & 32.4 & 2.19e-04\\ \hline 
			5000 & 35.0 & 100  & 45 & 100 & 94.0 & 1.74e-04 & 45 & 100 & 38.7 & 1.76e-04 & 45 & 100 & 35.2 & 1.74e-04\\ \hline 
			5000 & 50.0 & 100  & 43 & 100 & 100.5 & 1.43e-04 & 43 & 100 & 46.1 & 1.44e-04 & 43 & 100 & 41.3 & 1.45e-04\\ \hline 
			10000 & 25.0 & 50  & 32 & 50 & 106.9 & 1.17e-04 & 32 & 50 & 54.9 & 1.17e-04 & 32 & 50 & 51.4 & 1.18e-04\\ \hline 
			10000 & 35.0 & 50  & 32 & 50 & 120.0 & 1.16e-04 & 32 & 50 & 66.3 & 1.16e-04 & 32 & 50 & 61.9 & 1.17e-04\\ \hline 
			10000 & 50.0 & 50  & 31 & 50 & 129.1 & 1.39e-04 & 32 & 50 & 83.0 & 1.15e-04 & 31 & 50 & 76.2 & 1.37e-04\\ \hline 
			10000 & 25.0 & 100  & 43 & 100 & 253.0 & 1.59e-04 & 44 & 100 & 114.9 & 1.74e-04 & 45 & 100 & 105.0 & 1.56e-04\\ \hline 
			10000 & 35.0 & 100  & 42 & 100 & 263.7 & 1.32e-04 & 42 & 100 & 130.1 & 1.33e-04 & 42 & 100 & 118.3 & 1.37e-04\\ \hline 
			10000 & 50.0 & 100  & 39 & 100 & 259.6 & 3.70e-04 & 40 & 100 & 156.1 & 1.18e-04 & 39 & 100 & 136.5 & 3.56e-04\\ \hline 
			
		\end{tabular}
	}
\end{table}

\begin{table}[h]
	\centering
	\setlength{\tabcolsep}{2pt}
	\caption{Results of \textsc{nnls} on random sparse examples by using SpMM-MKL} 
	\label{tab:NNLS-sparse-MKL}
	{
		\begin{tabular}{|ccc|cccc|cccc|cccc|} \hline  
			\multicolumn{3}{|c|}{}     & \multicolumn{4}{c|}{NNLS-LANSVD} &
			\multicolumn{4}{c|}{NNLS-SLRP} & \multicolumn{4}{c|}{PFNNLS} \\ \hline
			m=n & SR (\%)& r  & iter &  svp & time  &  mse & iter &  svp & time  &  mse  & iter &  svp & time  &  mse \\ \hline
			10000 & 2.0 & 10 & 45 & 10 & 16.5 & 1.86e-04 & 45 & 10 & 9.2 & 1.54e-04 & 45 & 10 & 7.9 & 1.55e-04\\ \hline 
			10000 & 5.0 & 10 & 35 & 10 & 27.9 & 1.30e-04 & 35 & 10 & 15.0 & 1.30e-04 & 35 & 10 & 12.9 & 1.30e-04\\ \hline 
			10000 & 10.0 & 10 & 30 & 10 & 53.1 & 1.03e-04 & 30 & 10 & 23.9 & 1.03e-04 & 30 & 10 & 18.9 & 1.03e-04\\ \hline 
			10000 & 14.0 & 10 & 28 & 10 & 62.6 & 1.07e-04 & 29 & 10 & 28.6 & 1.09e-04 & 28 & 10 & 22.7 & 1.07e-04\\ \hline 
			10000 & 2.0 & 50 & 100 & 77 & 226.7 & 3.42e-01 & 100 & 50 & 53.9 & 9.88e-03 & 100 & 74 & 43.5 & 7.48e-01\\ \hline 
			10000 & 5.0 & 50 & 50 & 50 & 125.3 & 2.50e-04 & 48 & 50 & 40.1 & 2.52e-04 & 51 & 50 & 33.7 & 1.72e-04\\ \hline 
			10000 & 10.0 & 50 & 37 & 50 & 187.3 & 1.66e-04 & 37 & 50 & 53.0 & 1.67e-04 & 37 & 50 & 46.3 & 1.79e-04\\ \hline 
			10000 & 14.0 & 50 & 35 & 50 & 224.2 & 1.29e-04 & 35 & 50 & 62.7 & 1.30e-04 & 35 & 50 & 53.4 & 1.31e-04\\ \hline 
			50000 & 2.0 & 10 & 38 & 10 & 431.9 & 1.09e-04 & 38 & 10 & 168.3 & 1.09e-04 & 38 & 10 & 137.7 & 1.10e-04\\ \hline 
			50000 & 5.0 & 10 & 31 & 10 & 776.4 & 9.62e-05 & 31 & 10 & 328.7 & 9.62e-05 & 31 & 10 & 253.9 & 9.58e-05\\ \hline 
			50000 & 10.0 & 10 & 28 & 10 & 1560.0 & 1.09e-04 & 28 & 10 & 586.3 & 1.09e-04 & 28 & 10 & 438.0 & 1.02e-04\\ \hline 
			50000 & 14.0 & 10 & 28 & 10 & 2011.5 & 1.03e-04 & 28 & 10 & 792.0 & 1.03e-04 & 28 & 10 & 611.2 & 1.03e-04\\ \hline 
			50000 & 2.0 & 50 & 49 & 50 & 1415.8 & 1.96e-04 & 49 & 50 & 442.0 & 1.62e-04 & 52 & 50 & 335.4 & 1.38e-04\\ \hline 
			50000 & 5.0 & 50 & 37 & 50 & 2487.5 & 1.74e-04 & 37 & 50 & 710.7 & 1.77e-04 & 37 & 50 & 600.1 & 1.99e-04\\ \hline 
			50000 & 10.0 & 50 & 32 & 50 & 4483.5 & 1.17e-04 & 32 & 50 & 1244.8 & 1.17e-04 & 32 & 50 & 1074.5 & 1.17e-04\\ \hline 
			50000 & 14.0 & 50 & 31 & 50 & 5687.1 & 1.10e-04 & 31 & 50 & 1499.5 & 1.10e-04 & 31 & 50 & 1405.2 & 1.10e-04\\ \hline 
		\end{tabular}
	}
\end{table}

Similar observations can be drawn from Table \ref{tab:NNLS-dense} and \ref{tab:NNLS-sparse-MKL}.
However, for one case (sparse $G^k$, $m=n=10000$, SR=2.0\%), NNLS-LANSVD and PFNNLS seem to
produce different results from NNLS-SLRP. In another case (dense $G^k$, $m=n=1000$, SR=25.0\%)
the mse of all three methods is high. The reason is that \textsc{nnls} terminates after it has
reached the max iteration 100.

The last experiment is to use the three \textsc{nnls} algorithms on real data sets:
the Jester joke data set and the MovieLens data set, which are also mentioned in \cite{toh2010accelerated}.
As shown in Table \ref{tab:NNLS-real-MKL}, the Jester joke data set includes four examples:
jester-1, jester-2, jester-3, and jester-all. The last one is simply the combination of the
first three data sets. The MovieLens data set have three problems based on the size:
movie-100K, movie-1M, and movie-10M. We call Intel MKL routines for matrix multiplications
since the data are all sparse. Again, the degree of the polynomial filter is set to 1 and
the max iteration of \textsc{nnls} is 100.

%

\begin{table}[h]
	\centering
	\setlength{\tabcolsep}{1.5pt}
	\caption{Results of \textsc{nnls} on real examples by using SpMM-MKL} 
	\label{tab:NNLS-real-MKL}
	\begin{tabular}{|cc|cccc|cccc|cccc|} \hline
		\multicolumn{2}{|c|}{}     & \multicolumn{4}{c|}{NNLS-LANSVD} &
		\multicolumn{4}{c|}{NNLS-SLRP} & \multicolumn{4}{c|}{PFNNLS} \\ \hline
		name & $(m,n)$ & iter &  svp & time  &  mse & iter &  svp & time  &  mse  & iter &  svp & time  &  mse \\ \hline
		jester-1 & (24983, 100)  & 26 & 93 & 10.5 & 1.64e-01 & 27 & 69 & 4.6 & 1.76e-01 & 24 & 84 & 2.3 & 1.80e-01\\ \hline
		jester-2 & (23500, 100)  & 26 & 93 & 9.1 & 1.65e-01 & 26 & 79 & 4.3 & 1.72e-01 & 25 & 88 & 2.1 & 1.80e-01\\ \hline
		jester-3 & (24938, 100)  & 24 & 83 & 7.1 & 1.16e-01 & 27 & 74 & 4.6 & 1.24e-01 & 24 & 84 & 2.0 & 1.30e-01\\ \hline
		jester-all & (73421, 100)  & 26 & 93 & 26.2 & 1.58e-01 & 26 & 82 & 12.4 & 1.62e-01 & 24 & 84 & 5.6 & 1.74e-01\\ \hline
		moive-100K & (943, 1682)  & 34 & 100 & 4.2 & 1.28e-01 & 35 & 100 & 0.8 & 1.26e-01 & 36 & 100 & 0.6 & 1.23e-01\\ \hline
		moive-1M & (6040, 3706)  & 50 & 100 & 40.6 & 1.42e-01 & 50 & 100 & 10.8 & 1.43e-01 & 51 & 100 & 7.4 & 1.42e-01\\ \hline
		moive-10M & (71567, 10677)  & 54 & 100 & 620.1 & 1.26e-01 & 57 & 100 & 179.9 & 1.27e-01 & 52 & 100 & 92.7 & 1.27e-01\\ \hline
	\end{tabular}
\end{table}

We have the following observations from Table \ref{tab:NNLS-real-MKL}:
\begin{itemize}
	\item The three methods require similar number of iterations for all seven test cases.
	\item NNLS-LANSVD generates solutions with the highest rank, while NNLS-SLRP produces
	solutions with the lowest rank.
	The mse of PFNNLS is a little larger in Jester joke data sets.
	\item NNLS-SLRP provides 2-4 speedups over NNLS-LANSVD, while PFNNLS is 3-6 times
	faster than NNLS-LANSVD.
\end{itemize}

We make the following comments as extra interpretations of all numerical results
in this subsection.
\begin{itemize}
	\item The SLRP solver is essentially a subspace SVD method.
	As the authors point out in \cite{liu2015efficient}, the subspace is generated by solving a least square model
	by one Gauss-Newton iteration, after which high accuracy singular pairs can be extracted
	from the subspace. The difference of SLRP and our polynomial filters is that
	solving the SLRP sub-problem requires more computation than evaluating polynomial
	filters, in exchange for higher precision solutions. Hence SLRP can be regarded
	as a variant of the subspace extraction method.
	\item Converting the SVD of $A$ to the EVD of $A^TA$ implicitly applies
	a polynomial filter $\rho(t) = t^2$ to the singular values. Thus we can
	simply let the degree equal to 1 when performing subspace extractions on $A^TA$.
\end{itemize}
\subsection{Max Eigenvalue Optimization}
Consider the max eigenvalue optimization problem with
the following form
\begin{equation} \label{eqn:maxeig}
	\min_x F(x) + R(x) := \lambda_1(\cB(x)) + R(x),
\end{equation}
where $\cB(x) = G + \mathcal{A}^*(x)$. The subgradient
of $F(x)$ is
\[
\partial F(x) = \{\cA(U_1SU_1^T)~|~ S \succeq 0, \tr(S) = 1 \},
\]
where $U_1 \in \bR^{n\times r_1}$ is the subspace spanned by eigenvectors
of $\lambda_1(\cB(x))$ with multiplicity $r_1$. If $r_1 = 1$, then $\partial F(x)$ only contains one
element hence the subgradient becomes the gradient. If $r_1 > 1$, $F(x)$ is only sub-differentiable.
We point out that the $r_1 > 1$ case is generally harder than $r_1 = 1$.
In the following two applications, namely, phase recovery and blind deconvolution,
$F(x)$ is differentiable within a neighborhood of $x^*$.
\subsubsection{Phase Recovery}
We use the phase recovery formulation in \cite{doi:10.1137/15M1034283}.
The smooth part $F(x)=\lambda_1(\cB(x)):=\lambda_1(\cA^*(x))$ is defined
as follows. Let the known diagonal matrices
$C_k\in\bC^{n\times n}$ be the encoding diffraction patterns
corresponding to the $k$-th ``mask'' ($k=1,\ldots,L$). The measurements of an unknown signal $x_0 \in \bC^n$ are given
by
\[
b = \cA(x_{0}x_{0}^{*}):=\diag\left[
\begin{pmatrix}\cF C_{1}\\\vdots\\\cF C_L\end{pmatrix}
(x_{0}x_{0}^{*})
\begin{pmatrix}\cF C_{1}\\\vdots\\\cF C_L\end{pmatrix}^{\!\!*\,}
\right],
\]
where $\cF$ is the unitary discrete Fourier transform (DFT).  The
adjoint of $\cA$ can be written as
\[
 \cA^{*}y := \sum_{k=1}^{L} C_k^* \cF^* \mathrm{Diag}(y) \cF C_k.
\]
The non-smooth part $R(x)$ defined as the indicator function of the set
\[
\{x~|~\iprod{b}{x} - \|x\|_* \geq 1 \},
\]
where $\|\cdot\|_*$ is the dual norm of $\|\cdot \|_2$.
In the experiments of the phase retrieval problem,
both synthetic and real data are considered.
The noiseless synthetic data are generated as follows.
For each value of $L=7,\ldots,12$, we generate
random complex Gaussian vectors $x_{0}$ of length $n=1024,4096$, and
a set of $L$ random complex Gaussian masks $C_{k}$. 

We use the GAUGE algorithm proposed in \cite{doi:10.1137/15M1034283} to solve the phase recovery problem,
in which the \textsc{matlab} built-in function \texttt{eigs} is used as
the eigenvalue sub-problem solver.
Then we insert our polynomial filters to the original GAUGE implementation to
obtain the polynomial-filtered GAUGE algorithm (PFGAUGE). It should be mentioned
that though GAUGE is essentially a gradient method, it has many variants
to handle different cases, which we will briefly discuss about later.
A big highlight of GAUGE is a so-called ``primal-dual refinement step'', which
is able to reduce the iteration steps drastically. This feature also
has great impact on the performance of GAUGE and PFGAUGE.

The results of noiseless synthetic data is presented in Table \ref{tab:noiseless-1024}
and \ref{tab:noiseless-4096}, where ``iter'' denotes the number of outer iteration.
In addition, we record the number of DFT calls (nDFT) and the number of eigen-solvers or polynomial filters
evaluated, which is denoted by nEigs and nPF, respectively. The label ``err'' stands
for the relative mean squared error of the solution $\|xx^* - x_0x_0^*\|_F / \|x_0\|_2$.

\begin{table}[h]
	\centering
	\caption{Random Gaussian signals, noiseless, $n = 1024$} \label{tab:noiseless-1024}
	\begin{tabular}{|c|cccc|cccc|} \hline
		& \multicolumn{4}{c|}{GAUGE} & \multicolumn{4}{c|}{PFGAUGE} \\ \hline
		L & time & iter & nDFT(nEigs) & err & time & iter & nDFT(nPF) & err \\ \hline
		12 & 2.37 & 3 & 34248(10) & 8.7e-07 & 2.09 & 7 & 43386(17) & 1.1e-06 \\ \hline
		11 & 2.51 & 4 & 46244(13) & 9.4e-07 & 2.28 & 9 & 48813(21) & 1.2e-06 \\ \hline
		10 & 3.81 & 6 & 62160(17) & 7.0e-07 & 2.54 & 10 & 51900(25) & 1.3e-06 \\ \hline
		9 & 6.53 & 8 & 74772(19) & 7.9e-07 & 3.74 & 12 & 56030(28) & 1.8e-06 \\ \hline
		8 & 10.09 & 13 & 117776(29) & 1.2e-06 & 5.00 & 16 & 67960(36) & 1.6e-06 \\ \hline
		7 & 17.72 & 21 & 193956(47) & 1.6e-06 & 6.92 & 23 & 88785(50) & 6.8e-07 \\ \hline
	\end{tabular}
\end{table}

\begin{table}[h]
	\centering
	\caption{Random Gaussian signals, noiseless, $n = 4096$} \label{tab:noiseless-4096}
	\begin{tabular}{|c|cccc|cccc|} \hline
		& \multicolumn{4}{c|}{GAUGE} & \multicolumn{4}{c|}{PFGAUGE} \\ \hline
		L & time & iter & nDFT(nEigs) & err & time & iter & nDFT(nPF) & err \\ \hline
		12 & 10.85 & 4 & 57966(12) & 1.4e-06 & 9.66 & 11 & 74442(30) & 2.0e-06 \\ \hline
		11 & 15.25 & 5 & 83298(16) & 2.0e-06 & 10.43 & 11 & 77880(36) & 2.0e-06 \\ \hline
		10 & 22.63 & 8 & 118530(20) & 2.3e-06 & 11.20 & 14 & 79435(37) & 2.6e-06 \\ \hline
		9 & 35.89 & 11 & 176864(26) & 2.0e-06 & 16.78 & 20 & 118377(56) & 3.4e-06 \\ \hline
		8 & 49.88 & 15 & 235720(36) & 2.8e-06 & 15.52 & 23 & 105660(53) & 3.0e-06 \\ \hline
		7 & 160.64 & 40 & 779559(83) & 2.6e-06 & 33.66 & 40 & 208915(84) & 3.4e-06 \\ \hline
	\end{tabular}
\end{table}

The following observations should be clear from the results.
\begin{itemize}
	\item Both GAUGE and PFGAUGE obtain the required accuracy in all
	test cases. The problem grows harder to solve as $L$ decreases from 12 to 7.
	\item In general, it takes more iterations for PFGAUGE to converge compared
	with the original GAUGE. An exception is the $L=7$ case, in which the
	iterations are nearly the same.
	\item PFGAUGE gradually becomes faster as $L$ decreases. For example, it
	gives 5 times speedup at $L=7$ and $n=4096$. This can be also concluded from nDFT.
	After examining the iteration process, we find out that the eigenvalue sub-problem
	is hard to solve when $L$ is small, thus it takes more iterations for \texttt{eigs}
	to converge. However, since we only apply a polynomial filter to the iteration point,
	the cost of DFT can be greatly reduced. For the easy cases such as $L=12$, the
	performances of the two solvers are similar.
\end{itemize}

We next consider noisy synthetic data, which is generated with the following steps.
First we generate $L$ octanary masks $C_k$ as described in \cite{2014arXiv1407.1065C}, and
a real Gaussian vector $y\in \bR^n$. Then a solution $x_0 \in \bC^n$ is
chosen as the normalized rightmost eigenvector of $\cA^*(y)$, where $n=1024,4096$
as mentioned before. The
measurement vector $b$ is computed as $b:= \cA(x_0x_0^*) + \epsilon y/\|y\|_2$,
where $\epsilon:=\|b - \cA(x_0x_0^*)\|_2=\eta\|b\|_2$ is related with
a given relative noise level $\eta \in (0, 1)$.
The numerical experiments are conducted in the following procedure: first we choose the number of
masks $L\in\{12,9,6\}$ and noise level $\eta\in\{0.1\%,0.5\%,1\%,5\%, 10\%, 50\%\}$.
For each set of parameter, we randomly generate 100 test instances and feed them
to the solvers, after which we compute the relative mean squared error of the solution.
The ``\%'' label in Table \ref{tab:noisy-1024} and \ref{tab:noisy-4096} stands
for the rate of successful recoveries, which is defined as the number of solutions
whose MSE is smaller than $10^{-2}$. All other columns in Table \ref{tab:noisy-1024} and
\ref{tab:noisy-4096} record the average value of the 100 test instances.

\begin{table}[h]
	\centering
	\setlength{\tabcolsep}{3pt}
	\caption{Random Gaussian signals, noisy, $n = 1024$} \label{tab:noisy-1024}
	\begin{tabular}{|c|c|ccccc|ccccc|} \hline
		& & \multicolumn{5}{c|}{GAUGE} & \multicolumn{5}{c|}{PFGAUGE} \\ \hline
		L & $\eta$ & time & iter & nDFT(nEigs) & \% & err & time & iter & nDFT(nPF) & \% & err \\ \hline
		12 & 0.1\% & 3.02 & 51 & 9e+04(58) & 100 & 2.1e-03 & 2.53 & 48 & 1e+05(55) & 100 & 2.8e-03 \\ \hline
		9 & 0.1\% & 6.47 & 88 & 1e+05(102) & 96 & 3.4e-03 & 4.95 & 88 & 2e+05(103) & 98 & 3.8e-03 \\ \hline
		6 & 0.1\% & 13.27 & 195 & 2e+05(224) & 76 & 7.5e-03 & 9.14 & 198 & 2e+05(223) & 76 & 7.6e-03 \\ \hline
		12 & 0.5\% & 5.76 & 94 & 2e+05(107) & 95 & 5.5e-03 & 4.62 & 92 & 2e+05(99) & 94 & 5.8e-03 \\ \hline
		9 & 0.5\% & 9.90 & 128 & 2e+05(144) & 92 & 5.7e-03 & 6.78 & 127 & 2e+05(147) & 93 & 5.6e-03 \\ \hline
		6 & 0.5\% & 19.84 & 284 & 3e+05(327) & 74 & 8.5e-03 & 12.77 & 284 & 3e+05(335) & 68 & 8.5e-03 \\ \hline
		12 & 1.0\% & 22.82 & 296 & 7e+05(354) & 78 & 7.2e-03 & 14.80 & 326 & 7e+05(372) & 83 & 7.2e-03 \\ \hline
		9 & 1.0\% & 22.26 & 271 & 4e+05(318) & 86 & 8.0e-03 & 13.92 & 290 & 5e+05(346) & 88 & 7.8e-03 \\ \hline
		6 & 1.0\% & 71.67 & 979 & 1e+06(1171) & 92 & 6.5e-03 & 38.64 & 1049 & 1e+06(1287) & 92 & 6.4e-03 \\ \hline
		12 & 5.0\% & 43.27 & 556 & 1e+06(649) & 34 & 1.2e-02 & 30.53 & 734 & 1e+06(853) & 31 & 1.4e-02 \\ \hline
		9 & 5.0\% & 86.61 & 836 & 2e+06(1005) & 65 & 7.8e-03 & 37.42 & 1009 & 1e+06(1161) & 53 & 9.2e-03 \\ \hline
		6 & 5.0\% & 90.90 & 1113 & 1e+06(1322) & 88 & 4.7e-03 & 43.35 & 1237 & 1e+06(1477) & 81 & 5.8e-03 \\ \hline
		12 & 10.0\% & 61.22 & 692 & 2e+06(819) & 21 & 1.6e-02 & 29.86 & 749 & 1e+06(842) & 23 & 1.5e-02 \\ \hline
		9 & 10.0\% & 76.80 & 686 & 1e+06(804) & 48 & 1.0e-02 & 31.81 & 799 & 1e+06(926) & 50 & 1.0e-02 \\ \hline
		6 & 10.0\% & 78.61 & 950 & 1e+06(1124) & 76 & 5.3e-03 & 32.38 & 956 & 1e+06(1110) & 71 & 6.0e-03 \\ \hline
		12 & 50.0\% & 36.20 & 368 & 1e+06(433) & 41 & 1.3e-02 & 17.04 & 424 & 8e+05(491) & 55 & 8.9e-03 \\ \hline
		9 & 50.0\% & 42.28 & 361 & 8e+05(424) & 68 & 5.7e-03 & 14.76 & 374 & 5e+05(438) & 56 & 6.6e-03 \\ \hline
		6 & 50.0\% & 34.76 & 389 & 6e+05(463) & 86 & 2.5e-03 & 14.37 & 391 & 4e+05(477) & 87 & 2.5e-03 \\ \hline
		
	\end{tabular}
\end{table}

\begin{table}[h]
	\centering
	\setlength{\tabcolsep}{3pt}
	\caption{Random Gaussian signals, noisy, $n = 4096$} \label{tab:noisy-4096}
	\begin{tabular}{|c|c|ccccc|ccccc|} \hline
		& & \multicolumn{5}{c|}{GAUGE} & \multicolumn{5}{c|}{PFGAUGE} \\ \hline
		L & $\eta$ & time & iter & nDFT(nEigs) & \% & err & time & iter & nDFT(nPF) & \% & err \\ \hline
		12 & 0.1\% & 26.16 & 90 & 2e+05(106) & 89 & 3.5e-03 & 20.90 & 89 & 2e+05(104) & 88 & 3.1e-03 \\ \hline
		9 & 0.1\% & 45.83 & 194 & 4e+05(224) & 84 & 5.2e-03 & 30.10 & 192 & 3e+05(221) & 80 & 4.6e-03 \\ \hline
		6 & 0.1\% & 70.11 & 373 & 5e+05(436) & 41 & 1.1e-02 & 56.40 & 407 & 5e+05(487) & 35 & 1.3e-02 \\ \hline
		12 & 0.5\% & 36.31 & 124 & 3e+05(149) & 85 & 6.2e-03 & 29.69 & 138 & 3e+05(161) & 90 & 5.3e-03 \\ \hline
		9 & 0.5\% & 68.44 & 275 & 5e+05(310) & 72 & 7.9e-03 & 43.98 & 266 & 5e+05(319) & 69 & 8.0e-03 \\ \hline
		6 & 0.5\% & 117.91 & 625 & 8e+05(720) & 24 & 1.4e-02 & 84.42 & 653 & 7e+05(757) & 19 & 1.5e-02 \\ \hline
		12 & 1.0\% & 98.84 & 290 & 8e+05(331) & 73 & 7.6e-03 & 64.63 & 320 & 8e+05(409) & 76 & 7.2e-03 \\ \hline
		9 & 1.0\% & 109.12 & 404 & 9e+05(471) & 53 & 9.7e-03 & 62.14 & 473 & 8e+05(561) & 52 & 9.9e-03 \\ \hline
		6 & 1.0\% & 174.41 & 900 & 1e+06(1026) & 25 & 1.4e-02 & 99.72 & 813 & 9e+05(951) & 13 & 1.6e-02 \\ \hline
		12 & 5.0\% & 378.11 & 834 & 3e+06(945) & 54 & 9.5e-03 & 128.56 & 939 & 2e+06(1078) & 45 & 1.1e-02 \\ \hline
		9 & 5.0\% & 211.98 & 705 & 2e+06(831) & 36 & 1.3e-02 & 94.93 & 793 & 1e+06(912) & 36 & 1.3e-02 \\ \hline
		6 & 5.0\% & 211.82 & 937 & 1e+06(1084) & 29 & 1.5e-02 & 119.41 & 1048 & 1e+06(1284) & 18 & 1.7e-02 \\ \hline
		12 & 10.0\% & 372.63 & 801 & 3e+06(935) & 35 & 1.4e-02 & 118.20 & 896 & 2e+06(1036) & 34 & 1.4e-02 \\ \hline
		9 & 10.0\% & 217.35 & 668 & 2e+06(789) & 36 & 1.5e-02 & 102.80 & 826 & 1e+06(954) & 31 & 1.6e-02 \\ \hline
		6 & 10.0\% & 259.70 & 961 & 2e+06(1126) & 38 & 1.5e-02 & 124.12 & 1109 & 1e+06(1276) & 38 & 1.3e-02 \\ \hline
		12 & 50.0\% & 337.82 & 558 & 3e+06(634) & 45 & 1.2e-02 & 83.17 & 676 & 1e+06(748) & 35 & 1.6e-02 \\ \hline
		9 & 50.0\% & 223.85 & 572 & 2e+06(667) & 53 & 8.6e-03 & 75.28 & 678 & 1e+06(765) & 61 & 7.0e-03 \\ \hline
		6 & 50.0\% & 194.21 & 668 & 1e+06(761) & 67 & 5.5e-03 & 76.30 & 684 & 7e+05(834) & 71 & 3.4e-03 \\ \hline
	\end{tabular}
\end{table}

We make the following comments on the results of the noisy case:
\begin{itemize}
	\item Compared to the noiseless case, it takes much more iterations for GAUGE and PFGAUGE
	to converge.
	\item The solutions produced by the two algorithms are similar in the aspect of
	the number of iterations, the successful recover rate, and the solution error.
	For most cases, PFGAUGE needs a little more iterations than GAUGE.
	\item PFGAUGE often provides 2-3 times speedup over GAUGE. This can be also told from the nDFT column.
	Again, the number of DFT is greatly reduced thanks to the polynomial filters.
\end{itemize}

Finally we conduct experiments on 2D real data in order to assess
the speedup of Chebyshev filters on problems with larger size.
In this scenario the measured signal $x_0$ are gray scale images,
summarized in Table \ref{tab:pl-real}. For simplicity,
the images are numbered from 1 to 12. Case 1 and 2 are selected from
the \textsc{matlab} image database; Case 3 to 12 are from
the HubbleSite Gallery\footnote{See \url{http://hubblesite.org/images/gallery}.}.
The largest problem (No. 12) has the size $n=4\cdot 10^6$, which
brings a huge eigenvalue problem in each iteration. For each example
we generate 10 and 15 octanary masks. The results are summarized
in Table \ref{tab:2Dreal}. The column with label ``$f$'' records
the values of dual objective function. The label ``gap'' stands
for the duality gap of the problem. The other columns have the same
meaning as in Table \ref{tab:noiseless-1024}. Since data size is very large,
we terminate the algorithms as soon as they exceed a timeout threshold,
which is set to 18000 seconds in the experiment.

\begin{table}[h]
	\centering
	\caption{Image data for 2D signals} \label{tab:pl-real}
	\begin{tabular}{|c|c|c||c|c|c|} \hline
		No. & name & size & No. & name & size \\ \hline
		1 & coloredChips & $518\times 391$ & 2 & lighthouse & $480\times 640$ \\ \hline
		3 & asteriods(S) & $500\times 500$ & 4 & giantbubble(S) & $600\times 570$ \\ \hline
		5 & supernova(S) & $640\times 426$ & 6 & nebula(S) & $800\times 675$ \\ \hline
		7 & crabnebula(S) & $1000\times 1000$ & 8 & asteriods(L) & $1000\times 1000$ \\ \hline
		9 & giantbubble(L) & $1200\times 1140$ & 10 & nebula(L) & $1600\times 1350$ \\ \hline
	\end{tabular}
\end{table}

\begin{table}[h]
	\centering
	\caption{Phase retrieval comparisons on 2D real signal} \label{tab:2Dreal}
	\setlength{\tabcolsep}{1.5pt}
	\begin{tabular}{|c|c|ccccc|ccccc|} \hline
		\multicolumn{2}{|c|}{ } & \multicolumn{5}{c|}{GAUGE} & \multicolumn{5}{c|}{PFGAUGE} \\ \hline
		No. & L & time & iter & nDFT(nEigs) & f & gap & time & iter & nDFT(nPF) & f & gap \\ \hline
		\multirow{2}{*}{1} & 15 & 1155.89 & 4 & 3e+05(19) & 1.0e+05 & 5.0e-06 & 549.97 & 7 & 2e+05(29) & 1.0e+05 & 1.4e-06\\ \cline{2-12}
		& 10 & 3704.66 & 9 & 1e+06(26) & 1.0e+05 & 1.0e-05 & 210.36 & 9 & 6e+04(26) & 1.0e+05 & 7.9e-06\\ \hline 
		\multirow{2}{*}{2} & 15 & 3043.72 & 4 & 1e+06(19) & 1.1e+05 & 5.7e-06 & 722.95 & 6 & 2e+05(26) & 1.1e+05 & 1.5e-06\\ \cline{2-12}
		& 10 & 21898.10 & 11 & 8e+06(87) & 1.1e+05 & NaN & 158.82 & 8 & 5e+04(25) & 1.1e+05 & 5.0e-06\\ \hline
		\multirow{2}{*}{3} & 15 & 276.71 & 4 & 8e+04(18) & 1.4e+04 & 4.9e-06 & 230.50 & 5 & 7e+04(21) & 1.4e+04 & 6.2e-06\\ \cline{2-12}
		& 10 & 385.09 & 13 & 1e+05(34) & 1.4e+04 & 9.5e-06 & 208.24 & 10 & 6e+04(28) & 1.4e+04 & 9.4e-06\\ \hline 
		\multirow{2}{*}{4} & 15 & 9583.33 & 12 & 3e+06(35) & 2.3e+04 & 1.3e-06 & 295.84 & 6 & 8e+04(26) & 2.3e+04 & 3.6e-06\\ \cline{2-12}
		& 10 & 7433.17 & 18 & 2e+06(46) & 2.3e+04 & 7.9e-06 & 238.08 & 7 & 6e+04(24) & 2.3e+04 & 9.7e-06 \\ \hline
		\multirow{2}{*}{5} & 15 & 1735.88 & 5 & 5e+05(20) & 2.2e+04 & 3.9e-06 & 622.25 & 9 & 2e+05(29) & 2.2e+04 & 7.5e-06\\ \cline{2-12}
		& 10 & 1872.62 & 6 & 5e+05(22) & 2.2e+04 & 1.5e-06 & 291.96 & 7 & 7e+04(24) & 2.2e+04 & 1.8e-06 \\ \hline
		\multirow{2}{*}{6} & 15 & 643.35 & 7 & 1e+05(24) & 6.7e+04 & 2.7e-06 & 523.80 & 9 & 8e+04(30) & 6.7e+04 & 6.4e-06 \\ \cline{2-12}
		& 10 & 21943.32 & 15 & 3e+06(42) & 6.7e+04 & 1.2e-02 & 989.89 & 4 & 1e+05(10) & 6.7e+04 & 6.7e-06 \\ \hline 
		\multirow{2}{*}{7} & 15 & 2252.18 & 10 & 2e+05(30) & 6.3e+04 & 8.3e-06 & 1116.42 & 9 & 1e+05(30) & 6.3e+04 & 7.6e-06 \\ \cline{2-12}
		& 10 & 2530.10 & 22 & 2e+05(54) & 6.3e+04 & 8.4e-06 & 981.21 & 17 & 9e+04(46) & 6.3e+04 & 6.9e-06 \\ \hline 
		\multirow{2}{*}{8} & 15 & 18543.56 & 5 & 1e+06(22) & 5.8e+04 & 1.0e-03 & 4050.70 & 9 & 3e+05(30) & 5.8e+04 & 5.8e-06\\ \cline{2-12}
		& 10 & 19371.82 & 11 & 1e+06(33) & 5.8e+04 & 1.0e-03 & 1209.17 & 11 & 9e+04(34) & 5.8e+04 & 8.9e-06\\ \hline 
		\multirow{2}{*}{9} & 15 & 20239.04 & 2 & 1e+06(17) & 9.1e+04 & 7.8e-02 & 7046.67 & 7 & 5e+05(27) & 9.1e+04 & 9.9e-06\\ \cline{2-12}
		& 10 & 19610.56 & 8 & 1e+06(19) & 9.1e+04 & 6.0e-01 & 3892.26 & 6 & 2e+05(14) & 9.1e+04 & 4.7e-06 \\ \hline
		\multirow{2}{*}{10} & 15 & 7680.27 & 10 & 3e+05(32) & 2.7e+05 & 4.1e-06 & 4287.95 & 9 & 2e+05(30) & 2.7e+05 & 5.8e-06 \\ \cline{2-12}
		& 10 & 21958.19 & 5 & 8e+05(31) & 2.7e+05 & 1.7e-01 & 4042.50 & 24 & 1e+05(58) & 2.7e+05 & 4.8e-06\\ \hline 
	\end{tabular}
\end{table}

Table \ref{tab:2Dreal} actually shows the effectiveness of PFGAUGE:
\begin{itemize}
	\item For all test cases, PFGAUGE successfully converges within the time limit,
	while the original GAUGE method times out in most test cases with large data.
	GAUGE also fails in the Case 2 with $L=10$. After checking the output of the
	algorithm, the reason is that the eigen-solver fails to converge in some iterations.
	\item For the cases where both algorithm converge, PFGAUGE is able to provide over
	20 times speedup. This can be verified by the nDFT column of the two methods as well.
	The reason is that the traditional eigen-solver \texttt{eigs} is not scalable enough
	to deal with large problems. The convergence is very slow thus the accuracy of
	the solutions is not tolerable. However, our polynomial-filtered algorithm is
	able to extract a proper low-rank subspace that contains the desired eigenvectors
	quickly, hence the performance is much better.
	\item PFGAUGE sometimes needs fewer iterations than GAUGE. This is because \texttt{eigs}
	fails to converge during the iterations, thus GAUGE produces wrong updates.
\end{itemize}
\subsubsection{Blind Deconvolution}
Again, we use the blind deconvolution formulation in \cite{doi:10.1137/15M1034283}.
In this model, we measure the convolution of two signals
$s_{1}\in\bC^{m}$ and $s_{2}\in\bC^{m}$. Let
$B_{1}\in\bC^{m\times n_{1}}$ and
$B_{2}\in\bC^{m\times n_{2}}$ be two wavelet bases. The circular
convolution of the signals is defined by
\begin{align*}
b = s_{1}*s_{2}&= (B_1x_1)*(B_2x_2)
\\&= \cF^{-1}\diag\big((\cF B_1x_1)(\cF B_2x_2)^T\big)
\\&= \cF^{-1}\diag\big((\cF B_1)(x_1\overline{x}_2^*)(\overline{\cF B}_2)^*\big)
=: \cA(x_{1}\overline{x}_{2}^{*}),
\end{align*}
where $\cA$ is the non-symmetric linear map whose adjoint is
\[
\cA^{*}(y) := (\cF B_1)^*\mathrm{Diag}(\cF y)(\overline{\cF B_2}).
\]
The non-smooth part $R(x)$ defined as the indicator function of the set
\[
\{x~|~\iprod{b}{x} - \|x\|_* \geq 1 \}.
\]

In the experiments of the blind deconvolution, the GAUGE algorithm and its
polynomial-filtered version PFGAUGE are tested. The tolerance of feasibility and
duality gap is set to a moderate level, namely, $5\cdot 10^{-4}$ and $5\cdot 10^{-3}$.
We need to mention that there is a primal-dual refinement strategy in
the GAUGE method as mentioned before. It consists of two sub-problems: generate
a refined primal solution from a dual solution (pfd) and dual from primal (dfp).
In the ``dfp'' process, the algorithm has to solve a least-squared problem using
the gradient descent method. The ``dfp'' step is very slow in the experiments
and frequently leads to algorithm failure. Therefore, we disable this step since
it is optional.

The test data are shown in Table \ref{tab:bd-data}. There are eight pictures numbered
from 1 to 8. Case 1 is from the original GAUGE paper \cite{doi:10.1137/15M1034283}.
Case 2 to 4 are from the \textsc{matlab} gallery. Case 5 to 8 are downloaded
from the HubbleSite Gallery as mentioned before.
The results are demonstrated in Table \ref{tab:blind-deconvolution}. The columns with
label ``xErr1'' and ``xErr2'' list the relative errors $\|x_j - \hat{x}_j\|_2/\|x_j\|_2,j=1,2$,
where $\hat{x}_j$ stands for the solution returned by the algorithms. The column with label
``rErr'' contains the relative residual $\|b - \cA(\hat{x}_1\bar{\hat{x}}_2^*)\|_2 / \|b\|_2$.
The other columns share the same meaning with Table \ref{tab:2Dreal}.
\begin{table}[h]
	\centering
	\caption{Image data for blind deconvolution} \label{tab:bd-data}
	\begin{tabular}{|c|c|c||c|c|c|} \hline
		No. & name & size & No. & name & size \\ \hline
		1 & shapes & $256\times 256$ & 2 & cameraman & $256\times 256$ \\ \hline
		3 & rice & $256\times 256$ & 4 & lighthouse & $480\times 600$ \\ \hline
		5 & crabnebula512 & $512\times 512$ & 6 & crabnebula1024 & $1024\times 1024$ \\ \hline
		7 & mars & $1280\times 1280$ & 8 & macs & $1280 \times 1280$ \\ \hline
	\end{tabular}
\end{table}
\begin{table}[h]
	\centering
	\caption{Blind deconvolution comparisons} \label{tab:blind-deconvolution}
	\setlength{\tabcolsep}{1.5pt}
	{
	\begin{tabular}{|c|cccccccc|} \hline
		\multirow{2}{*}{No.}& \multicolumn{8}{c|}{GAUGE(nodfp)} \\ \cline{2-9}
		& time & iter & nDFT(nEigs) & f & gap & rErr & xErr1 & xErr2 \\ \hline 
		1 & 13.10 & 15 & 4248(16) & 1.681e+00 & 1.7e-03 & 3.4e-04 & 9.1e-02 & 5.5e-01  \\ \hline  
		2 & 14.34 & 14 & 4816(15) & 1.683e+00 & 4.1e-03 & 4.1e-04 & 1.4e-01 & 5.4e-01  \\ \hline
		3 & 15.66 & 18 & 5440(19) & 1.672e+00 & 2.0e-03 & 3.0e-04 & 1.3e-01 & 5.4e-01  \\ \hline
		4 & 63.48 & 21 & 7028(22) & 1.683e+00 & 4.3e-03 & 2.5e-04 & 1.1e-01 & 5.4e-01  \\ \hline
		5 & 35.62 & 14 & 4552(15) & 1.689e+00 & 4.7e-03 & 2.8e-04 & 1.3e-01 & 5.3e-01  \\ \hline
		6 & 252.59 & 26 & 9588(27) & 1.673e+00 & 3.0e-04 & 3.6e-04 & 1.0e-01 & 5.4e-01 \\ \hline
		7 & 282.38 & 22 & 7116(23) & 1.696e+00 & 4.7e-03 & 2.6e-04 & 9.8e-02 & 5.4e-01 \\ \hline
		8 & 559.64 & 36 & 14732(37) & 1.667e+00 & 4.7e-03 & 7.2e-05 & 9.4e-02 & 5.3e-01\\ \hline
		\multirow{2}{*}{No.}& \multicolumn{8}{c|}{PFGAUGE(nodfp)} \\ \cline{2-9}
		& time & iter & nDFT(nPF) & f & gap & rErr & xErr1 & xErr2 \\ \hline
		1 & 6.85 & 15 & 2740(16) & 1.681e+00 & 1.7e-03 & 4.0e-04 & 9.1e-02 & 5.5e-01   \\ \hline
        2 & 7.30 & 14 & 2844(15) & 1.684e+00 & 4.7e-03 & 5.1e-04 & 1.4e-01 & 5.4e-01   \\ \hline
        3 & 8.93 & 18 & 3576(19) & 1.671e+00 & 2.1e-04 & 3.7e-04 & 1.3e-01 & 5.4e-01   \\ \hline
        4 & 40.60 & 26 & 4560(27) & 1.683e+00 & 4.1e-03 & 4.1e-04 & 1.1e-01 & 5.4e-01  \\ \hline
        5 & 21.91 & 14 & 2800(15) & 1.689e+00 & 4.8e-03 & 2.9e-04 & 1.3e-01 & 5.3e-01  \\ \hline
        6 & 119.43 & 26 & 4524(27) & 1.677e+00 & 2.9e-03 & 3.2e-04 & 1.0e-01 & 5.4e-01 \\ \hline
        7 & 163.91 & 25 & 4204(26) & 1.696e+00 & 4.4e-03 & 3.0e-04 & 1.0e-01 & 5.4e-01 \\ \hline
        8 & 297.61 & 37 & 7568(38) & 1.659e+00 & 4.1e-04 & 3.4e-04 & 9.3e-02 & 5.3e-01 \\ \hline

	\end{tabular}}
\end{table}
We have similar observations under these settings. PFGAUGE is able to produce similar results
as GAUGE with two times speedup. The performance is not as impressive as what is shown in Table \ref{tab:2Dreal}
because the eigenvalue sub-problem is easier to solve in this case.
\subsection{ADMM for SDP}
In this section we show the numerical results of our PFAM.
The experiments fall into two categories: standard SDP
and non-linear SDP.
\subsubsection{Standard SDP}
First we test PFAM on
the two-body reduced density matrix (2-RDM) problem. The problem
has a block diagonal structure with respect to the variable $X$,
with each block being a low rank matrix. Thus the polynomial filters
can be applied to each block to reduce the cost. We use the preprocessed
dataset in \cite{li2017semi}. Only the cases with the maximum block size
greater than 1000 are selected. The details of the data can be found in
\cite {nakata2008variational}.

Table \ref{tab:sdp-standard} contains the numerical results of ADMM and PFAM.
The columns with headers ``pobj'', ``pinf'', ``dinf'', and ``gap'' record the primal
objective function value, primal infeasibility, dual infeasibility, and duality gap at the solution,
respectively. The overall tolerance of each algorithm is set to $10^{-4}$.
A high accuracy is not needed in this application because we use ADMM to generate
initial solutions for second-order methods such as SSNSDP \cite{li2017semi}.

\begin{table}[h]
	\caption{Results of ADMM and PFAM on 2-RDM problems} \label{tab:sdp-standard}
	\centering
	\setlength{\tabcolsep}{2pt}
	\begin{tabular}{|c|cccccc|cccccc|}
		\hline 
		\multicolumn{1}{|c|}{data} & \multicolumn{6}{c|}{ADMM} & \multicolumn{6}{c|}{PFAM} \\ \hline 
		name & time & itr & pobj & pinf & dinf & gap & time & itr & pobj & pinf & dinf & gap \\ \hline
		AlH &     73 &     373 &   2.46e+02 &  2.1e-05 &  9.9e-05 &  4.8e-05 &     71 &     377 &   2.46e+02 &  2.1e-05 &  9.9e-05 &  4.7e-05 \\ 
		B2 &    428 &     800 &   5.73e+01 &  9.6e-05 &  9.9e-05 &  1.2e-04 &    257 &     908 &   5.73e+01 &  8.1e-05 &  9.9e-05 &  1.3e-04 \\ 
		BF &     64 &     339 &   1.42e+02 &  3.6e-05 &  9.9e-05 &  7.0e-05 &     62 &     341 &   1.42e+02 &  3.6e-05 &  9.9e-05 &  6.9e-05 \\ 
		BH &    476 &    1058 &   2.73e+01 &  9.6e-05 &  9.5e-05 &  3.7e-04 &    322 &    1026 &   2.73e+01 &  9.1e-05 &  9.8e-05 &  3.5e-04 \\ 
		BH3O &    727 &     437 &   1.31e+02 &  6.9e-05 &  9.9e-05 &  2.0e-04 &    530 &     458 &   1.31e+02 &  2.8e-05 &  9.9e-05 &  1.7e-04 \\ 
		BN &    144 &     748 &   9.33e+01 &  3.0e-05 &  9.9e-05 &  1.1e-04 &     93 &     739 &   9.33e+01 &  4.2e-05 &  9.9e-05 &  1.2e-04 \\ 
		BO &     91 &     377 &   1.17e+02 &  8.8e-05 &  9.9e-05 &  9.9e-05 &     80 &     375 &   1.17e+02 &  9.8e-05 &  9.2e-05 &  1.0e-04 \\ 
		BeF &     68 &     356 &   1.28e+02 &  5.3e-05 &  9.9e-05 &  9.0e-05 &     58 &     354 &   1.28e+02 &  5.5e-05 &  9.8e-05 &  9.1e-05 \\ 
		BeO &     91 &     488 &   1.02e+02 &  7.3e-05 &  9.9e-05 &  8.8e-05 &     71 &     479 &   1.02e+02 &  4.8e-05 &  9.9e-05 &  9.6e-05 \\ 
		C2 &   1763 &     831 &   9.10e+01 &  5.8e-05 &  9.9e-05 &  1.8e-04 &   1032 &     911 &   9.10e+01 &  5.2e-05 &  9.9e-05 &  1.7e-04 \\ 
		CF &     68 &     354 &   1.59e+02 &  3.5e-05 &  9.9e-05 &  3.6e-05 &     60 &     351 &   1.59e+02 &  3.8e-05 &  9.9e-05 &  3.4e-05 \\ 
		CH &    453 &     969 &   4.12e+01 &  9.7e-05 &  9.8e-05 &  3.6e-04 &    303 &    1009 &   4.12e+01 &  9.8e-05 &  9.6e-05 &  3.4e-04 \\ 
		CH2 &   1516 &    1350 &   4.50e+01 &  8.0e-05 &  9.9e-05 &  3.9e-04 &    754 &    1339 &   4.50e+01 &  8.3e-05 &  9.9e-05 &  3.6e-04 \\ 
		CH2 &   1698 &    1496 &   4.48e+01 &  9.8e-05 &  9.2e-05 &  3.4e-04 &    782 &    1428 &   4.48e+01 &  9.8e-05 &  9.5e-05 &  3.5e-04 \\ 
		CH3 &   2028 &    1113 &   4.94e+01 &  5.5e-05 &  9.9e-05 &  1.4e-04 &   1010 &    1149 &   4.94e+01 &  6.7e-05 &  9.9e-05 &  2.5e-04 \\ 
		CH3N &    756 &     450 &   1.27e+02 &  4.8e-05 &  9.9e-05 &  1.6e-04 &    491 &     456 &   1.27e+02 &  5.9e-05 &  9.9e-05 &  1.5e-04 \\ 
		CN &     83 &     439 &   1.11e+02 &  8.9e-05 &  9.9e-05 &  8.8e-05 &     68 &     437 &   1.11e+02 &  9.8e-05 &  9.5e-05 &  8.3e-05 \\ 
		CO+ &     73 &     379 &   1.35e+02 &  8.9e-05 &  9.9e-05 &  9.1e-05 &     60 &     377 &   1.35e+02 &  9.4e-05 &  9.9e-05 &  9.1e-05 \\ 
		CO &     63 &     328 &   1.35e+02 &  6.7e-05 &  9.8e-05 &  3.5e-05 &     54 &     325 &   1.35e+02 &  6.8e-05 &  9.9e-05 &  3.4e-05 \\ 
		F- &    265 &     648 &   9.96e+01 &  9.9e-05 &  9.5e-05 &  2.9e-04 &    189 &     639 &   9.96e+01 &  9.0e-05 &  9.9e-05 &  2.4e-04 \\ 
		H2O &    819 &     717 &   8.54e+01 &  9.5e-05 &  9.9e-05 &  3.4e-04 &    495 &     809 &   8.54e+01 &  9.8e-05 &  9.0e-05 &  3.4e-04 \\ 
		HF &    268 &     576 &   1.05e+02 &  9.9e-05 &  9.3e-05 &  2.2e-04 &    210 &     575 &   1.05e+02 &  9.6e-05 &  9.8e-05 &  2.1e-04 \\ 
		HLi2 &    189 &     622 &   1.91e+01 &  7.5e-05 &  9.8e-05 &  1.4e-04 &    126 &     656 &   1.91e+01 &  7.9e-05 &  9.8e-05 &  1.5e-04 \\ 
		HN2+ &     99 &     326 &   1.38e+02 &  7.7e-05 &  9.9e-05 &  7.3e-05 &     82 &     326 &   1.38e+02 &  8.7e-05 &  9.9e-05 &  7.1e-05 \\ 
		HNO &    251 &     451 &   1.60e+02 &  7.4e-05 &  9.9e-05 &  4.1e-05 &    207 &     446 &   1.60e+02 &  7.8e-05 &  9.9e-05 &  3.9e-05 \\ 
		LiF &     80 &     406 &   1.16e+02 &  4.4e-05 &  9.9e-05 &  9.9e-05 &     73 &     406 &   1.16e+02 &  4.5e-05 &  9.9e-05 &  9.8e-05 \\ 
		LiH &    472 &    1030 &   9.00e+00 &  9.0e-05 &  4.2e-05 &  1.1e-04 &    369 &    1037 &   9.00e+00 &  9.1e-05 &  5.4e-05 &  1.2e-04 \\ 
		LiOH &    141 &     468 &   9.57e+01 &  9.8e-05 &  8.5e-05 &  1.4e-04 &    107 &     452 &   9.57e+01 &  9.5e-05 &  9.6e-05 &  1.6e-04 \\ 
		NH &    443 &     943 &   5.86e+01 &  8.6e-05 &  9.9e-05 &  3.0e-04 &    298 &     946 &   5.86e+01 &  8.8e-05 &  9.9e-05 &  3.1e-04 \\ 
		NH &    405 &     870 &   5.86e+01 &  8.9e-05 &  9.9e-05 &  2.8e-04 &    290 &     937 &   5.86e+01 &  9.8e-05 &  9.1e-05 &  2.7e-04 \\ 
		NH2- &    967 &     849 &   6.32e+01 &  9.8e-05 &  9.3e-05 &  4.0e-04 &    502 &     854 &   6.32e+01 &  9.7e-05 &  9.7e-05 &  3.9e-04 \\ 
		NH3 &   3775 &     925 &   6.82e+01 &  7.3e-05 &  9.9e-05 &  3.2e-04 &   1819 &     967 &   6.82e+01 &  9.7e-05 &  9.8e-05 &  3.4e-04 \\ 
		NaH &     73 &     370 &   1.65e+02 &  8.9e-05 &  9.8e-05 &  1.7e-06 &     66 &     370 &   1.65e+02 &  8.9e-05 &  9.9e-05 &  5.0e-06 \\ 
		P &    176 &     410 &   3.41e+02 &  2.2e-05 &  9.9e-05 &  7.0e-05 &    155 &     412 &   3.41e+02 &  2.2e-05 &  9.8e-05 &  6.6e-05 \\ 
		SiH4 &    212 &     292 &   3.12e+02 &  2.8e-05 &  9.9e-05 &  3.8e-05 &    193 &     292 &   3.12e+02 &  2.8e-05 &  9.9e-05 &  2.8e-05 \\
	    \hline
	\end{tabular}
\end{table}

As observed in Table \ref{tab:sdp-standard}, PFAM is two times faster than ADMM in large 2-RDM
problems, such as CH2, C2, and NH3. We mention that the speedup provided by PFAM depends
on the number of large low-rank variable blocks. One may notice that PFAM is less
effective on some problems like AlH and BF. The main reason is that there are very
few large blocks in these dataset, and polynomial filters are not designed for
small matrices. In fact, it is always observed that a full eigenvalue
decomposition is faster than any other truncated eigen-solvers or polynomial-filtered methods
in these small cases. Another minor reason is that some blocks is not low-rank, 
causing the performance of PFAM to be limited. This observation again addresses
the feature of PFAM: it is the most suitable for large-scale low-rank problems.

\subsubsection{Non-linear SDP}
As an extension, we consider plugging polynomial filters into multi-block ADMM to
observe its speed-up. We are interested in the non-linear SDPs from the weighted LS model with spectral norm constraints
and least unsquared deviations (LUD) model in \cite{wang2013orientation}.

Suppose $K$ is a given integer and $S$ and $W$ are two known matrices,
the weighted LS model with spectral norm constraints is
\begin{equation} \label{eqn:LS-spectral}
\begin{array}{rl}
\max & \iprod{W\odot S}{G}, \\
\mathrm{s.t.} & G_{ii} = I_2, \\
& G\succeq 0, \\
& \|G\|_2 \leq \alpha K,
\end{array}
\end{equation}
where $G = (G_{ij})_{i,j=1,\ldots,K} \in \mathcal{S}^{2K}$ is the variable, with each block $G_{ij}$ being
a 2-by-2 small matrix, and $\|\cdot\|_2$ is the spectral norm. A three-block ADMM is introduced to
solve \eqref{eqn:LS-spectral}:
\begin{eqnarray}
y^{k+1} &=& -\cA(C + X^k - Z^k) - \frac{1}{\mu}(\cA(G^k) - b), \label{eqn:ADMM-LS-y}\\
Z^{k+1} &=& \argmin_{Z} \frac{\alpha K}{\mu}\|Z\|_* + \frac{1}{2}\|Z - B^k\|_F^2, \label{eqn:ADMM-LS-Z}
\end{eqnarray}
\begin{eqnarray}
X^{k+1} &=& \argmin_{X\succeq 0} \|X - H^k\|_F^2, \label{eqn:ADMM-LS-X}\\
G^{k+1} &=& (1 - \gamma)G^k + \gamma \mu(X^{k+1} - H^k) \label{eqn:ADMM-LS-G}, 
\end{eqnarray}
where 
\begin{equation*}
\begin{split}
B^k &= C + X^k + \cA^*(y^{k+1}) + \frac{1}{\mu}G^k, \\
H^k &= Z^{k+1} - C - \mathcal{A}^*(y^{k+1}) - \frac{1}{\mu} G^k 
\end{split}
\end{equation*}
are the auxiliary variables. In the ADMM update, steps \eqref{eqn:ADMM-LS-Z} and \eqref{eqn:ADMM-LS-X}
require EVDs, both of which can be replaced with the polynomial-filtered
method.

The semi-definite relaxation of the LUD problem is
\begin{equation} \label{eqn:LUD}
\begin{array}{rl}
\min & \sum_{1\leq i < j \leq K} \|c_{ij} - G_{ij}c_{ji}\|_2, \\
\mathrm{s.t.} & G_{ii} = I_2, \\
& G \succeq 0, \\
& \|G\|_2 \leq \alpha K, \\
\end{array}
\end{equation}
where $G$, $G_{ij}$, $K$ are defined the same in \eqref{eqn:LS-spectral}, and $c_{ij} \in \mathbb{R}^2$
are known vectors. The spectral norm constraint in \eqref{eqn:LUD} is optional. The authors proposed
a four-block ADMM to solve \eqref{eqn:LUD}. The update scheme is quite similar with
\eqref{eqn:ADMM-LS-y}-\eqref{eqn:ADMM-LS-G}. We omit the full ADMM updates in this section. One can refer
to \cite{wang2013orientation} for more details.

We generate simulated data as the authors did in \cite{wang2013orientation}. First $K$ centered
images of $129\times 129$ with different orientations are generated. The $K$ rotation matrices
$R_i,i=1,\ldots,K$ is uniformly sampled. For simplicity no noise is add to the test images. Then we solve model
\eqref{eqn:LS-spectral} and \eqref{eqn:LUD} to obtain the Gram matrix $\hat{G}$. Finally the estimated
orientation matrices $\hat{R}_i$ are extracted from $\hat{G}$. The mean squared error defined in
\eqref{eqn:orientation-MSE} is used to evaluate the accuracy of $\hat{R}_i$.
\begin{equation} \label{eqn:orientation-MSE}
MSE = \frac{1}{K}\sum_{i=1}^{K}\|R_i - \hat{O}\hat{R}_i\|_F^2.
\end{equation}
The matrix $\hat{O}$ is the optimal registration matrix between $R_i,i=1,\ldots,K$ and
$\hat{R}_i,i=1,\ldots,K$.
Table \ref{tab:sdp-non-linear} shows the results of ADMM and PFAM on various settings.
Here $\alpha = 0.00$ means that there is no spectral norm constraint in \eqref{eqn:LUD}.
The number of the rotation matrices $K$ is chosen from $500,1000,1500,2000$. The primal
infeasibility (pinf), dual infeasibility (dinf), and mean squared error (mse) are
also recorded.

\begin{table}[h]
	\centering
	\caption{Results of ADMM and PFAM on non-linear SDPs} \label{tab:sdp-non-linear}
	\begin{tabular}{|c|ccccc|ccccc|}
	\hline 
	\multicolumn{11}{|c|}{LS model, $\alpha = 0.67$} \\ \hline
	\multirow{2}{*}{$K$} & \multicolumn{5}{c|}{ADMM} & \multicolumn{5}{c|}{PFAM} \\ \cline{2-11} 
	& time & itr & pinf & dinf & mse & time & itr & pinf & dinf & mse \\ \hline
	  500 &   36.1 &  232 &  9.9e-04 &  1.9e-04 &  1.46e-02 &    9.2 &  231 &  1.0e-03 &  1.9e-04 &  1.46e-02 \\ 
	 1000 &  120.1 &  163 &  9.7e-04 &  6.9e-04 &  9.82e-03 &   21.2 &  163 &  9.7e-04 &  6.9e-04 &  9.82e-03 \\ 
	 1500 &  426.6 &  189 &  9.9e-04 &  3.2e-04 &  6.07e-03 &   74.4 &  189 &  9.9e-04 &  3.2e-04 &  6.07e-03 \\ 
	 2000 & 1189.6 &  202 &  9.9e-04 &  1.7e-04 &  4.42e-03 &  148.7 &  202 &  9.9e-04 &  1.7e-04 &  4.42e-03 \\ 
	\hline
	\hline 
	\multicolumn{11}{|c|}{LUD model, $\alpha = 0.00$} \\ \hline
	\multirow{2}{*}{$K$} & \multicolumn{5}{c|}{ADMM} & \multicolumn{5}{c|}{PFAM} \\ \cline{2-11} 
	& time & itr & pinf & dinf & mse & time & itr & pinf & dinf & mse \\ \hline
	  500 &    7.9 &   78 &  9.9e-04 &  8.4e-05 &  9.99e-03 &    3.8 &   83 &  9.6e-04 &  8.2e-05 &  9.99e-03 \\ 
	 1000 &   44.4 &   99 &  9.4e-04 &  9.5e-04 &  3.06e-03 &   15.3 &   90 &  9.8e-04 &  4.6e-05 &  5.42e-03 \\ 
	 1500 &  131.9 &  101 &  9.1e-04 &  6.1e-04 &  2.24e-03 &   47.8 &  118 &  9.1e-04 &  6.5e-04 &  2.24e-03 \\ 
	 2000 &  334.5 &  102 &  9.3e-04 &  4.1e-04 &  1.91e-03 &   79.0 &  103 &  9.4e-04 &  4.2e-04 &  1.91e-03 \\ 
	\hline
	\hline 
	\multicolumn{11}{|c|}{LUD model, $\alpha = 0.67$} \\ \hline
	\multirow{2}{*}{$K$} & \multicolumn{5}{c|}{ADMM} & \multicolumn{5}{c|}{PFAM} \\ \cline{2-11} 
	& time & itr & pinf & dinf & mse & time & itr & pinf & dinf & mse \\ \hline
	  500 &   47.2 &  274 &  1.0e-03 &  2.7e-04 &  1.91e-03 &   14.9 &  277 &  9.9e-04 &  2.7e-04 &  1.91e-03 \\ 
	 1000 &  294.3 &  356 &  1.0e-03 &  1.3e-04 &  1.74e-03 &   68.0 &  369 &  1.0e-03 &  1.3e-04 &  1.74e-03 \\ 
	 1500 &  871.8 &  364 &  1.0e-03 &  1.2e-04 &  1.60e-03 &  165.7 &  316 &  1.0e-03 &  4.4e-04 &  1.63e-03 \\ 
	 2000 & 2526.5 &  413 &  1.0e-03 &  3.2e-04 &  2.41e-03 &  347.2 &  373 &  1.0e-03 &  4.1e-04 &  2.41e-03 \\ 
	\hline
	\end{tabular}
\end{table}

We observe that PFAM requires similar iterations but provides 3 to 9 times speedup over ADMM.
For these two problems, the solution $G$ is low-rank. Indeed it is a rank-3 matrix in
the form $RR^T$. Note that for ADMM, we have tested both full eigenvalue decomposition and
the truncated version and report the one which costs less time. These examples again
justify the effectiveness of PFAM, though the convergence
property is not clear for the multi-block version.

\section{Conclusion} \label{sec:conclusion}
In this paper, we propose a framework of polynomial-filtered
methods for low-rank optimization problems. Our motivation is based on
the key observation that the iteration points lie in a low-rank subspace
of $\bR^{n\times n}$. Therefore, the strategy is to extract this subspace
approximately, and then perform one update based on the projection
of the current iteration point. Polynomials are also applied to increase
the accuracy. Intuitively, the target subspaces
between any two iterations should be close enough under some conditions. 
We next give two solid examples PFPG and PFAM in order to
show the basic structure of polynomial-filtered methods. It is easy to
observe that this kind of method couples the subspace refinement and the main
iteration together.
In the theoretical part, we  analyze the convergence of PFPG and PFAM.
A key assumption is that the initial subspace should not be orthogonal to
the target subspace to be used in the next iteration. Together
with the Chebyshev polynomials we are able to estimate the approximation error
of the subspace. The main convergence result indicates that the degree of
the polynomial can remain a constant as the iterations proceed, which is meaningful
in real applications. Even if the warm-start property is not considered, the
degree grows very slowly (about order $\cO(\log k)$) to ensure the convergence.
Our numerical 
experiments shows that the polynomial-filtered algorithms are pretty effective
on low-rank problems compared to the original methods, since they successfully
reduce the computational costs of large-scale EVDs. Meanwhile, the number of iterations
is barely increased. These observations coincide with our theoretical results.

\bibliographystyle{siam}
\bibliography{spectual}
\end{document}